\newcommand{\mO}{{\mathcal{O}}}
\newcommand{\beq}{\begin{equation}}
\newcommand{\eeq}{\end{equation}}
\newcommand{\bey}{\begin{eqnarray}}
\newcommand{\eey}{\end{eqnarray}}
\begin{document}

\title{Fast second-order evaluation for  variable-order Caputo fractional derivative with applications to fractional sub-diffusion equations
\thanks{This work is supported in part by research grants of the Science and Technology Development Fund, Macau SAR (file no. 0118/2018/A3), and University of Macau (file no. MYRG2018-00015-FST).}}

\author{Jia-li Zhang$^1$ \and Zhi-wei Fang$^2$ \and Hai-wei Sun$^3$
}


\institute{1 \at
              Department of Mathematics, University of Macau,
Macao\\
\email{zhangjl2628@163.com}\\
           2 \at
              Department of Mathematics, University of Macau,
Macao\\
\email{fzw913@yeah.net}\\
3 \at
              Corresponding Author. Department of Mathematics, University of Macau,
Macao\\
\email{HSun@um.edu.mo}
}

\date{Received: date / Accepted: date}

\maketitle

\begin{abstract}
In this paper, we propose a fast second-order approximation to the variable-order (VO) Caputo fractional derivative, which is developed based on $L2$-$1_\sigma$ formula and the exponential-sum-approximation technique. The fast evaluation method can achieve the second-order accuracy and further reduce the computational cost and the acting memory for the VO Caputo fractional derivative. This fast algorithm  is applied to construct a relevant fast temporal second-order and spatial fourth-order scheme ($FL2$-$1_{\sigma}$ scheme) for the multi-dimensional VO time-fractional sub-diffusion equations. Theoretically, $FL2$-$1_{\sigma}$ scheme is proved to fulfill the similar properties of the coefficients as those of the well-studied $L2$-$1_\sigma$ scheme. Therefore, $FL2$-$1_{\sigma}$ scheme is strictly proved to be unconditionally stable and convergent. A sharp decrease in the computational cost and the acting memory  is shown in the numerical examples to demonstrate the efficiency of the proposed method.

\keywords {variable-order Caputo fractional derivative \and exponential-sum-approximation method \and fast algorithm \and time-fractional sub-diffusion equation \and stability and convergence}
\subclass{ 35R11, 65M06, 65M12}
\end{abstract}

\section{Introduction}
Over the last few decades, the fractional calculus has gained much attention of both mathematics and physical science due to the non-locality of fractional operators. In fact the fractional calculus has been widely applied in various fields including the biology, the ecology, the diffusion, and the control system \cite{Hilfer-2000, Benson-2000, Kilbas-2006, Liu-2004, Podlubny-1999, Mainardi-2000, Raberto-2002, Oldham-1974}. Recently, more and more researchers revealed that many important dynamical problems exhibit the fractional order behavior that may vary with time, space, or some other factors, which leads to the concept of the \textit{variable-order} (VO) fractional operator; see \cite{Lorenzo-2002, Sun-2011, Sun-2019}. This fact indicates that the VO fractional calculus is an expected tool to provide an effective mathematical framework to characterize the complex problems in fields of science and engineering; for instances, anomalous diffusion \cite{Chechkin-2005, Sun-2009, Zhuang-2009, Chen-2010}, viscoelastic mechanics \cite{Coimbra-2003, Diazand-2009, Jia-2017, Sokolov-2005}, and petroleum engineering \cite{Obembe-2017}.

The VO fractional derivative can be regarded as an extension of the constant-order (CO) fractional derivative \cite{Sun-2015}. An interesting extension of the classical fractional calculus was proposed by Samko and Ross \cite{Samko-1993} in which they generalized the Riemann-Liouville and Marchaud fractional operators in the VO case. Later, Lorenzo and Hartley \cite{Lorenzo-2002} first introduced the concept of VO operators. According to the concept, the order of the operator is allowed to vary as a function of independent variables such as time and space. Afterwards,  various VO differential operators with specific meanings were defined. Coimbra \cite{Coimbra-2003} gave a novel definition for the VO differential operator by taking the Laplace-transform of the Caputo's definition of the fractional derivative. Soon, Coimbra, and Kobayashi \cite{Soon-2005} showed that Coimbra's definition was better suited for modeling physical problems due to its meaningful physical interpretations; see also \cite{Ramirez-2010}. Moreover, the Coimbra's VO fractional derivative could  be considered as the Caputo-type definition, which is defined as follows \cite{Coimbra-2003, Sun-2019}
\begin{align}\label{caputo-def}
\prescript{C}{0}{\mathcal{D}}^{\alpha(t)}_tu(t)\equiv\frac 1 {\Gamma(1-\alpha(t))}\int_0^t\frac{u'(\tau)}{(t-\tau)^{\alpha(t)}}\mathrm{d}\tau,
\end{align}
where $\alpha(t)\in[\underline{\alpha},\overline{\alpha}]\subset(0,1)$ is the VO function and $\Gamma(\cdot)$ denotes the Gamma function.

Since the problems described by the VO fractional operator are difficult to handle analytically, possible numerical implementations of the VO fractional problems are given. In \cite{Zhao-2015}, Zhao, Sun, and Karniadakis derived two second-order approximations for the VO Caputo fractional derivative and provided the error analysis. For the VO time-fractional sub-diffusion equations, Du, Alikhanov, and Sun \cite{Du-2020} proposed $L2$-$1_{\sigma}$ scheme, which makes use of the piecewise high-order polynomial interpolation of the solution. The resulting method was investigated to be unconditionally stable and second-order convergent via the energy method.

In consequence of the nonlocal property and historical dependence of the fractional operators, the aforementioned numerical methods always require all previous function values, which leads to an average $\mathcal{O}(n)$  storage and computational cost $\mathcal{O}(n^2)$, where $n$ is the total number of the time levels. To overcome this difficulty, many efforts have been made to speed up the evaluation of the CO fractional derivative \cite{Lu-2015, Lu-2018, Ke-2015, Fu-2017, Bertaccini-2019, Jiang-2017, Lubich-2002, Baffet-2016, Zeng-2017}. Nevertheless, the coefficient  matrices of the numerical schemes for the VO fractional problems lose the Toeplitz-like structure and the VO fractional derivative is no longer a convolution operator. Therefore, those fast methods for the CO fractional derivative cannot be directly applied to VO cases. Recently, Fang, Sun, and Wang \cite{Fang-2020} proposed a fast algorithm for the VO Caputo fractional derivative based on a shifted binary block partition  and uniform polynomial approximations of degree $r$. Compared with the general direct method, the proposed algorithm reduces the memory requirement from $\mathcal{O}(n)$ to $\mathcal{O}(r \log n)$ and the complexity
from $\mathcal{O}(n^2)$ to $\mathcal{O}(rn \log n)$. Zhang, Fang, and Sun \cite{Zhang-2020} developed an exponential-sum-approximation (ESA) technique to speed up  $L1$ formula for the VO Caputo fractional derivative. The proposed method reduces the storage requirement to $\mO(\log^2 n)$ and the computational cost to $\mathcal{O}(n\log^2 n)$, respectively. Both of those fast methods were proved to achieve the convergence order of $2-\overline{\alpha}$ with expected parameters.

Motivated by the need of fast high-order approaches for the VO Caputo fractional derivative (\ref{caputo-def}), we combine $L2$-$1_{\sigma}$ formula \cite{Du-2020} with the ESA technique \cite{Zhang-2020} to produce a fast evaluation formula, which is called $FL2$-$1_{\sigma}$ formula later. Compared with $L2$-$1_{\sigma}$ formula, the fast algorithm reduces the acting memory from $\mathcal{O}(n)$ to $\mO(\log^2n)$ and flops
from $\mathcal{O}(n^2)$ to $\mathcal{O}(n\log^2 n)$. Then $FL2$-$1_{\sigma}$ formula is applied to construct a fast temporal second-order and spatial fourth-order difference scheme ($FL2$-$1_{\sigma}$ scheme) for the multi-dimensional VO time-fractional sub-diffusion equations, which significantly reduces the memory requirement and computational complexity. We present the properties of the coefficients of $FL2$-$1_{\sigma}$ formula, which ensure the stability and the convergence of the proposed scheme. Numerical experiments are provided to show the sharp decrease in the CPU time and storage of the fast algorithm with the same accuracy as the direct method.

The paper is organized as follows. In Section \ref{fast-approximation}, we present $FL2$-$1_{\sigma}$ formula approaching the VO Caputo fractional derivative utilizing the ESA technique. Then we construct $FL2$-$1_{\sigma}$ scheme for multi-dimensional VO fractional sub-diffusion problems in Section \ref{finite-difference-scheme} and investigate the stability and convergence. In Section \ref{numerical-results}, reliability and efficiency are confirmed by some numerical examples. Concluding remarks are given in Section \ref{concluding-remarks}.

\section{Approximation for VO  fractional derivative}\label{fast-approximation}
We firstly introduce $L2$-$1_{\sigma}$ formula \cite{Du-2020} for the VO Caputo fractional derivative. Consider the VO Caputo fractional derivative defined by (\ref{caputo-def}) with $t\in(0,T]$ $(T\geq1)$. For a positive integer $n$, let $\Delta t=T/n$ and $t_k=k\Delta t$ for $k=0,1,\ldots,n$. We denote $t_{k+\sigma_k}=t_k+\sigma(t_k)\Delta t$ for $k=0,1,\ldots,n-1$. The parameter $\sigma(t_k)$ is determined by the equation \cite{Du-2020}
\begin{align*}
F(\sigma)=\sigma-\Big(1-\frac 1 2\alpha(t_k+\sigma\Delta t)\Big)=0,
\end{align*}
which has a unique root $\sigma(t_k)\in(\frac 1 2,1)$  being conveniently calculated by Newton's method. In the following, we denote $\sigma_k=\sigma(t_k)$, $\alpha_{k+\sigma_k}=\alpha(t_{k+\sigma_k})$.

From the definition \eqref{caputo-def}, the VO Caputo derivative at the point $t_{k+\sigma_k}$ is presented as
\begin{align}\label{caputo}
\prescript{C}{0}{\mathcal{D}}^{\alpha(t)}_tu(t)|_{t=t_{k+\sigma_k}}&=\frac 1 {\Gamma\big(1-\alpha_{k+\sigma_k}\big)}\int_{0}^{t_{k+\sigma_k}}\frac{u'(\tau)}{(t_{k+\sigma_k}-\tau)^{\alpha_{k+\sigma_k}}}\mathrm{d}\tau.
\end{align}
To discretize the VO Caputo fractional derivative, we denote the quadric interpolation on the time interval $[t_{k-1},t_k]$ with $1\leq k\leq n-1$ and the linear interpolation over the time interval $[t_k,t_{k+1}]$ with $0\leq k\leq n-1$ by
\begin{align*}
&L_2 u(\tau)=\sum_{p=-1}^1u(t_{k+p})\prod_{q=-1}^1\frac{\tau-t_{k+q}}{t_{k+p}-t_{k+q}},\ \ \tau\in [t_{k-1}, t_k],\\
&L_1u(\tau)=u(t_k)\frac {\tau-t_{k+1}}{t_k-t_{k+1}}+u(t_{k+1})\frac{\tau-t_k}{t_{k+1}-t_k},\ \ \tau\in [t_k, t_{k+1}].
\end{align*}

\subsection{$L2$-$1_{\sigma}$ formula}
Based on the above interpolation polynomials, at time $t_{k+\sigma_k}$ with $k=0,1,\ldots,n-1$, $L2$-$1_{\sigma}$ approximation formula to (\ref{caputo}) is obtained as\cite{Du-2020}
\begin{align}\label{L21}
\prescript{H}{0}{\mathcal{D}}^{\alpha_{k+\sigma_k}}_tu(t_{k+\sigma_k})
=&\frac 1 {\Gamma(1-\alpha_{k+\sigma_k})}\left(\int_{0}^{t_k}\frac{\big(L_2u(\tau)\big)'}{(t_{k+\sigma_k}-\tau)^{\alpha_{k+\sigma_k}}}\mathrm{d}\tau
  + \int_{t_k}^{t_{k+\sigma_k}}\frac{\big(L_1u(\tau)\big)'}{(t_{k+\sigma_k}-\tau)^{\alpha_{k+\sigma_k}}}\mathrm{d}\tau\right)\nonumber\\
=&s^{(k)}\sum_{l=0}^k g_l^{(k)}\big(u(t_{k-l+1})-u(t_{k-l})\big),
\end{align}
where $g_0^{(0)}=\sigma_0^{1-\alpha_{\sigma_0}}$, and for $k\geq 1$,
\begin{align*}
g_l^{(k)}&=\Delta t^{\alpha_{k+\sigma_k}-2}(1-\alpha_{k+\sigma_k})\cdot
\begin{cases}
\int_{t_{k-1}}^{t_k}\frac {\tau-t_{k-1/2}}{(t_{k+\sigma_k}-\tau)^{\alpha_{k+\sigma_k}}}\mathrm{d}\tau
   +\int_{t_k}^{t_{k+\sigma_k}}\frac {\Delta t}{(t_{k+\sigma_k}-\tau)^{\alpha_{k+\sigma_k}}}\mathrm{d}\tau,\ \ l=0,\\
\int_{t_{k-l-1}}^{t_{k-l}}\frac {\tau-t_{k-l-1/2}}{(t_{k+\sigma_k}-\tau)^{\alpha_{k+\sigma_k}}}\mathrm{d}\tau
   +\int_{t_{k-l}}^{t_{k-l+1}}\frac {t_{k-l+3/2}-\tau}{(t_{k+\sigma_k}-\tau)^{\alpha_{k+\sigma_k}}}\mathrm{d}\tau,\\
   ~~~~~~~~~~~~~~~~~~~~~~~~~~~~~~~~~~~~~~~~~~~~~~~~~~~~~~~~~1\leq l\leq k-1,\\
\int_{t_0}^{t_1}\frac {t_{3/2}-\tau}{(t_{k+\sigma_k}-\tau)^{\alpha_{k+\sigma_k}}}\mathrm{d}\tau,\ \ l=k,
  \end{cases}
\end{align*}
and
\begin{align*}
s^{(k)}=&\frac {\Delta t^{-\alpha_{k+\sigma_k}}}{\Gamma(2-\alpha_{k+\sigma_k})}.
\end{align*}

The following lemma shows the local truncation error of $L2$-$1_{\sigma}$ approximation formula for the VO Caputo fractional derivative.
\begin{lemma}\label{L-error-L21}\cite{Du-2020}
Suppose $u\in \mathcal{C} ^3\big([0,t_{k+1}]\big)$. Let
\begin{align*}
r^k=\Big|\prescript{C}{0}{\mathcal{D}}^{\alpha(t)} _tu(t)|_{t=t_{k+\sigma_k}}-\prescript{H}{0}{\mathcal{D}}^{\alpha_{k+\sigma_k}}_tu(t_{k+\sigma_k})\Big|.
\end{align*}
Then, we have
\begin{align*}
\big|r^k\big|
\leq\frac{\max\limits_{0\leq t\leq t_{k+1}}\big|u^{(3)}(t)\big|\sigma_k^{-\alpha_{k+\sigma_k}}}{\Gamma(1-\alpha_{k+\sigma_k})}
    \left(\frac 1{12}+\frac{\sigma_k}{6(1-\alpha_{k+\sigma_k})} \right)\Delta t^{3-\alpha_{k+\sigma_k}}.
\end{align*}
\end{lemma}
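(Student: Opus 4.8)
The plan is to subtract the approximation \eqref{L21} from the exact value \eqref{caputo} and write the local error as a single weighted integral of the \emph{derivative} of the interpolation error. Writing $\alpha:=\alpha_{k+\sigma_k}$ for brevity and noting that both representations share the kernel $(t_{k+\sigma_k}-\tau)^{-\alpha}$ and the prefactor $1/\Gamma(1-\alpha)$, I would start from
\begin{align*}
r^k=\frac{1}{\Gamma(1-\alpha)}\left|\int_{0}^{t_k}\frac{\big(u-L_2u\big)'(\tau)}{(t_{k+\sigma_k}-\tau)^{\alpha}}\,\mathrm{d}\tau+\int_{t_k}^{t_{k+\sigma_k}}\frac{\big(u-L_1u\big)'(\tau)}{(t_{k+\sigma_k}-\tau)^{\alpha}}\,\mathrm{d}\tau\right|,
\end{align*}
and then estimate the ``history'' contribution on $[0,t_k]$ (piecewise quadratic interpolation) and the ``local'' contribution on $[t_k,t_{k+\sigma_k}]$ (linear interpolation) separately.

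For the history term, on each subinterval $[t_{j-1},t_j]$ with $1\le j\le k$ the interpolant $L_2u$ passes through $u$ at $t_{j-1},t_j,t_{j+1}$, so the error $\phi_j:=u-L_2u$ satisfies $\phi_j(t_{j-1})=\phi_j(t_j)=0$. I would integrate by parts to shift the derivative onto the kernel, which is smooth on $[0,t_k]$ because $t_{k+\sigma_k}-\tau\ge\sigma_k\Delta t>0$ there; the boundary terms vanish and one is left with $-\alpha\int_{t_{j-1}}^{t_j}\phi_j(\tau)(t_{k+\sigma_k}-\tau)^{-\alpha-1}\,\mathrm{d}\tau$. The quadratic remainder $\phi_j(\tau)=\tfrac{1}{6}u^{(3)}(\xi_\tau)(\tau-t_{j-1})(\tau-t_j)(\tau-t_{j+1})$ is sign-definite on $[t_{j-1},t_j]$ and of size $\mathcal{O}(\Delta t^3)$, so summing over $j$ reduces the bound to $\int_{0}^{t_k}(t_{k+\sigma_k}-\tau)^{-\alpha-1}\,\mathrm{d}\tau=\tfrac{1}{\alpha}\big[(\sigma_k\Delta t)^{-\alpha}-t_{k+\sigma_k}^{-\alpha}\big]$, whose dominant part produces the factor $\sigma_k^{-\alpha}\Delta t^{-\alpha}$ and, after bookkeeping of the cubic moments, the $\tfrac{1}{12}$ piece of the stated bound.

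For the local term the linear interpolant is only formally $\mathcal{O}(\Delta t^{2})$ accurate, with a leading $u''$ contribution, so the crux is to exploit the defining relation $F(\sigma_k)=0$, i.e.\ $\sigma_k=1-\tfrac12\alpha$. Writing $(L_1u)'=\big(u(t_{k+1})-u(t_k)\big)/\Delta t$ and Taylor-expanding both $u'(\tau)$ and this secant slope about $t_{k+1/2}$, the $u''$ term is multiplied by the weighted moment
\begin{align*}
\int_{t_k}^{t_{k+\sigma_k}}\frac{\tau-t_{k+1/2}}{(t_{k+\sigma_k}-\tau)^{\alpha}}\,\mathrm{d}\tau=\Delta t^{2-\alpha}\int_{0}^{\sigma_k}\frac{s-\tfrac12}{(\sigma_k-s)^{\alpha}}\,\mathrm{d}s=\Delta t^{2-\alpha}\left[\big(\sigma_k-\tfrac12\big)\frac{\sigma_k^{1-\alpha}}{1-\alpha}-\frac{\sigma_k^{2-\alpha}}{2-\alpha}\right],
\end{align*}
and a short computation shows the bracket vanishes \emph{exactly} when $\sigma_k=1-\tfrac12\alpha$. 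The surviving contributions are all of third-derivative order; bounding their $u^{(3)}$ coefficients uniformly and factoring out $\int_{t_k}^{t_{k+\sigma_k}}(t_{k+\sigma_k}-\tau)^{-\alpha}\,\mathrm{d}\tau=\tfrac{(\sigma_k\Delta t)^{1-\alpha}}{1-\alpha}$ delivers the $\tfrac{\sigma_k}{6(1-\alpha)}$ piece together with the common prefactor $\sigma_k^{-\alpha}\Delta t^{3-\alpha}/\Gamma(1-\alpha)$.

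I expect the local term, not the history term, to be the main obstacle: one must verify the moment cancellation that hinges on the special evaluation node $t_{k+\sigma_k}$, and then track the remaining $u^{(3)}$ contributions carefully enough to recover the \emph{sharp} constant $\tfrac{\sigma_k}{6(1-\alpha)}$ rather than a crude over-estimate. The history estimate is comparatively routine once the integration-by-parts is in place. Adding the two estimates and using $\sigma_k\in(\tfrac12,1)$ to keep all $\sigma_k$-dependent factors under control then yields the claimed bound.
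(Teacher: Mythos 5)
Your proposal is correct and follows essentially the same argument as the proof this paper relies on: the paper states Lemma \ref{L-error-L21} without proof, citing \cite{Du-2020}, and the proof there is exactly your decomposition --- integration by parts on the history intervals, where the quadratic interpolation error vanishes at both endpoints and is bounded by $\frac{1}{12}\max|u^{(3)}|\Delta t^3$ (yielding the $\frac{1}{12}$ constant), plus a Taylor expansion about $t_{k+1/2}$ on the local interval, where the $u''$ moment $\int_0^{\sigma_k}(s-\frac12)(\sigma_k-s)^{-\alpha}\,\mathrm{d}s$ cancels precisely because $\sigma_k=1-\frac12\alpha_{k+\sigma_k}$ (yielding the $\frac{\sigma_k}{6(1-\alpha_{k+\sigma_k})}$ constant, via $\frac18+\frac1{24}=\frac16$). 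No gaps worth flagging; the only cosmetic slip is calling the quadratic remainder $\phi_j$ ``sign-definite,'' which holds for the polynomial factor but not for $\phi_j$ itself unless $u^{(3)}$ has constant sign --- and your bound only needs $|\phi_j|$, so nothing is affected.
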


Utilizing $L2$-$1_{\sigma}$ approximation formula to calculate the value at the current time level, it needs to compute the summation of a series including the values of all previous time levels. Therefore, $L2$-$1_{\sigma}$ approximation formula requires $\mathcal{O}(n)$ storage and $\mathcal{O}(n^2)$ computational complexity. It inspires us to construct a fast algorithm to approach $L2$-$1_{\sigma}$ approximation formula (\ref{L21}).

\subsection{$FL2$-$1_{\sigma}$ formula}
Now, we develop a fast high-order numerical formula ($FL2$-$1_{\sigma}$ formula) for the VO Caputo fractional derivative. The kernel function in the VO Caputo fractional derivative is approximated by the ESA technique, which is stated in \cite{Beylkin-2017, Zhang-2020}.

\begin{lemma}\label{L-soe}
For any constant $\alpha_{k+\sigma_k}\in[\underline{\alpha},\overline{\alpha}]\subset(0,1)$, $0<\frac{\Delta t}T\leq \frac{t_{k+\sigma_k}-\tau}T\leq1$ for $\tau\in[0,t_{k-1}]$, $1\leq k \leq n-1$ and the expected accuracy $0<\epsilon\leq\ 1/e$, there exist a constant $h$, integers $\overline{N}$ and $\underline{N}$, which satisfy
\begin{align}\label{parameter-choose}
h&=\frac {2\pi}{\log3+\overline{\alpha}\log(\cos 1)^{-1}+\log\epsilon^{-1}},\nonumber\\
\underline{N}&=\left\lceil\frac 1{h}\frac 1{\underline{\alpha}}\big(\log\epsilon+\log\Gamma(1+\overline{\alpha})\big)\right\rceil,\\
\overline{N}&=\left\lfloor\frac 1{h}\left(\log \frac T{\Delta t}+\log\log\epsilon^{-1}+\log\underline{\alpha}+2^{-1}\right)\right\rfloor,\nonumber
\end{align}
such that
\begin{align}\label{soe}
\Bigg|\left(\frac {t_{k+\sigma_k}-\tau}{T}\right)^{-\alpha_{k+\sigma_k}}-\sum_{i=\underline{N}+1}^{\overline{N}}\theta_i^{(k)} e^{-\lambda_i(t_{k+\sigma_k}-\tau)/T}\Bigg|
\leq \left(\frac {t_{k+\sigma_k}-\tau}{T}\right)^{-\alpha_{k+\sigma_k}}\epsilon,
\end{align}
where the quadrature exponents and weights are given by
\begin{align*}
\lambda_i=e^{ih},\ \ \theta_i^{(k)}=\frac{he^{\alpha_{k+\sigma_k}ih}}{\Gamma(\alpha_{k+\sigma_k})}. 
\end{align*}
\end{lemma}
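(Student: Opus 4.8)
The plan is to realize the stated exponential sum as a truncated trapezoidal-rule discretization of a classical integral representation of the power function, and then to bound separately the quadrature (discretization) error on the full doubly-infinite grid and the two tail (truncation) errors incurred by restricting the index to $\underline{N}+1 \le i \le \overline{N}$. Writing $\alpha = \alpha_{k+\sigma_k}$ and $x = (t_{k+\sigma_k}-\tau)/T \in [\Delta t/T, 1]$, I would start from the Gamma-function identity $x^{-\alpha} = \frac{1}{\Gamma(\alpha)}\int_0^\infty s^{\alpha-1}e^{-xs}\,\mathrm{d}s$, valid since $\alpha>0$ and $x>0$, and apply the substitution $s = e^{u}$ to obtain $x^{-\alpha} = \frac{1}{\Gamma(\alpha)}\int_{-\infty}^{\infty} e^{\alpha u}e^{-x e^{u}}\,\mathrm{d}u$. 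Discretizing this integral by the trapezoidal rule with uniform step $h$ and nodes $u_i = ih$ reproduces exactly the sum $\sum_i \theta_i^{(k)} e^{-\lambda_i(t_{k+\sigma_k}-\tau)/T}$ with $\lambda_i = e^{ih}$ and $\theta_i^{(k)} = he^{\alpha ih}/\Gamma(\alpha)$, which fixes the quadrature exponents and weights. The triangle inequality then splits the total error into the full-grid trapezoidal error plus a lower tail $\sum_{i\le\underline{N}}$ and an upper tail $\sum_{i>\overline{N}}$.

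The main obstacle is the full-grid trapezoidal error, which governs the choice of $h$. Since $g(u) = e^{\alpha u}e^{-xe^{u}}$ is analytic and rapidly decaying in the horizontal strip $|\operatorname{Im} u| < \pi/2$ (the double-exponential factor forces decay as long as $\operatorname{Re}(e^u) > 0$), I would use the sharp analytic trapezoidal estimate obtained by shifting the contour to the lines $\operatorname{Im} u = \pm 1$; equivalently, via Poisson summation, the error equals $\frac{1}{\Gamma(\alpha)}\sum_{m\ne 0}\widehat{g}(2\pi m/h)$ with $\widehat{g}(\xi) = x^{-(\alpha-i\xi)}\Gamma(\alpha-i\xi)$, whose $m=\pm1$ terms dominate. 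On the shifted line, $|e^{-xe^{u\pm i}}| = e^{-xe^{u}\cos 1}$, so $\int_{\IR}|g(u\pm i)|\,\mathrm{d}u = x^{-\alpha}(\cos 1)^{-\alpha}\Gamma(\alpha)$; the standard bound then yields a discretization error of order $x^{-\alpha}(\cos 1)^{-\alpha}e^{-2\pi/h}$. Requiring this to be $\lesssim x^{-\alpha}\epsilon$ uniformly in $\alpha\in[\underline{\alpha},\overline{\alpha}]$, absorbing the numerical constant into $\log 3$ while taking the worst case $(\cos 1)^{-\overline{\alpha}}$, produces precisely $h = 2\pi/\big(\log 3 + \overline{\alpha}\log(\cos 1)^{-1} + \log\epsilon^{-1}\big)$.

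The two truncation errors are more routine and determine $\underline{N}$ and $\overline{N}$. For the lower tail, using $e^{-\lambda_i x}\le 1$ I would bound $\sum_{i\le\underline{N}}\theta_i^{(k)}e^{-\lambda_i x}$ by the geometric series $\frac{h}{\Gamma(\alpha)}\sum_{i\le\underline{N}}e^{\alpha ih} \approx e^{\alpha\underline{N}h}/\Gamma(1+\alpha)$, and forcing this below $\epsilon$ (permissible since $x^{-\alpha}\ge 1$) gives the stated $\underline{N}$ after using $1/\alpha \le 1/\underline{\alpha}$ and $\Gamma(1+\alpha)\le\Gamma(1+\overline{\alpha})$. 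For the upper tail, I would compare the sum with the integral $\frac{1}{\Gamma(\alpha)}\int_{\overline{N}h}^{\infty}e^{\alpha u}e^{-xe^{u}}\,\mathrm{d}u = \frac{x^{-\alpha}}{\Gamma(\alpha)}\Gamma(\alpha, xe^{\overline{N}h})$ and invoke the incomplete-gamma asymptotic $\Gamma(\alpha,z)\sim z^{\alpha-1}e^{-z}$; the dominant factor $e^{-xe^{\overline{N}h}}$ at the worst case $x = \Delta t/T$ must be $\lesssim\epsilon$, which after retaining the subdominant $z^{\alpha-1}$ and $1/\Gamma(\alpha)$ corrections yields the extra $\log\underline{\alpha}+2^{-1}$ terms in $\overline{N}$. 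Combining the three bounds and splitting $\epsilon$ among them completes the estimate \eqref{soe}.
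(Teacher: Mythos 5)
The paper never proves this lemma itself: it is quoted from the ESA literature it cites right above the statement (\cite{Beylkin-2017, Zhang-2020}), and your outline --- the representation $x^{-\alpha}=\frac{1}{\Gamma(\alpha)}\int_0^\infty s^{\alpha-1}e^{-xs}\,\mathrm{d}s$, the substitution $s=e^u$, the trapezoidal rule with step $h$, a strip-of-analyticity/Poisson-summation bound for the discretization error (contour shifted to $\operatorname{Im}u=\pm 1$, which is exactly where the $(\cos 1)^{-\alpha}$ and $e^{-2\pi/h}$ factors come from), and separate geometric and incomplete-gamma bounds for the two tails --- is precisely the argument of those references. Your derivation of $h$ is sound, and the derivation of $\underline{N}$ is sound in outline (though your claim $\Gamma(1+\alpha)\le\Gamma(1+\overline{\alpha})$ is false in general, since $\Gamma$ is not monotone on $[1,2]$ but has an interior minimum near $1.46$; that worst-casing needs an extra argument or slack constant).

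The genuine gap is the step you dispose of in one clause: that the upper-tail estimate ``yields the extra $\log\underline{\alpha}+2^{-1}$ terms in $\overline{N}$.'' Carrying out your own bound, the relative tail error at the worst case $x=\Delta t/T$ is about $\Gamma(\alpha,z_0)/\Gamma(\alpha)\le \alpha z_0^{\alpha-1}e^{-z_0}/\Gamma(1+\alpha)$ with $z_0=\frac{\Delta t}{T}e^{\overline{N}h}$, so what is needed is an \emph{additive} condition, essentially $z_0\ge\log\epsilon^{-1}+\mathcal{O}(1)$, i.e.\ $\overline{N}h\ge\log\frac{T}{\Delta t}+\log\big(\log\epsilon^{-1}+\mathcal{O}(1)\big)$ --- a logarithm of a sum. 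The stated formula is instead a sum of logarithms, which makes the relation multiplicative: $z_0=\underline{\alpha}\,e^{1/2}\log\epsilon^{-1}$. For $\underline{\alpha}<e^{-1/2}$ this is far too small, and the omitted tail is of order $\epsilon^{\underline{\alpha}e^{1/2}}\gg\epsilon$. Concretely, for $\alpha(t)\equiv 0.1$, $\epsilon=10^{-6}$, $T/\Delta t=10^3$, one gets $h\approx 0.419$ and $\overline{N}=18$, and at $x=\Delta t/T$ the discarded tail $\sum_{i>\overline{N}}\theta_i^{(k)}e^{-\lambda_i x}$ is about $7\times 10^{-3}$, while the right-hand side of \eqref{soe} is about $2\times 10^{-6}$. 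So either the formula for $\overline{N}$ must be read (or corrected) in the log-of-sum form used in the source proofs, or the estimate cannot close; a complete proof has to detect this discrepancy rather than assert that the stated formula drops out of the asymptotics. As it stands, your proposal would not compile into a proof of the literal statement.
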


Now $L2$-$1_{\sigma}$ formula \eqref{L21} can be approximated by {\small
\begin{align}\label{HL-parts}
&\prescript{H}{0}{\mathcal{D}}^{\alpha_{k+\sigma_k}}_tu(t_{k+\sigma_k})\nonumber\\
=&\frac 1 {\Gamma(1-\alpha_{k+\sigma_k})}\Bigg(T^{-\alpha_{k+\sigma_k}}\int_{0}^{t_k}\big(L_2u(\tau)\big)'
  \left(\frac{t_{k+\sigma_k}-\tau}{T}\right)^{-\alpha_{k+\sigma_k}} \mathrm{d}\tau
  + \int_{t_k}^{t_{k+\sigma_k}}\frac{\big(L_1 u(\tau)\big)'}{(t_{k+\sigma_k}-\tau)^{\alpha_{k+\sigma_k}}}\mathrm{d}\tau\Bigg)\nonumber\\
\approx&\frac 1 {\Gamma(1-\alpha_{k+\sigma_k})}\Bigg(T^{-\alpha_{k+\sigma_k}}\int_{0}^{t_k}\big(L_2u(\tau)\big)'
  \sum_{i=\underline{N}+1}^{\overline{N}}\theta_{i}^{(k)}e^{-\lambda_i(t_{k+\sigma_k}-\tau)/T}\mathrm{d}\tau
  +\int_{t_k}^{t_{k+\sigma_k}}\frac{\big(L_1u(\tau)\big)'}{(t_{k+\sigma_k}-\tau)^{\alpha_{k+\sigma_k}}}\mathrm{d}\tau\Bigg)\nonumber\\
=&\frac{T^{-\alpha_{k+\sigma_k}}}{\Gamma(1-\alpha_{k+\sigma_k})}
  \sum_{i=\underline{N}+1}^{\overline{N}}\theta_{i}^{(k)}\int_{0}^{t_k}\big(L_2u(\tau)\big)' e^{-\lambda_i(t_{k+\sigma_k}-\tau)/T}\mathrm{d}\tau
  +\frac{u(t_{k+1})-u(t_k)}{\Delta t\Gamma(1-\alpha_{k+\sigma_k})}
  \int_{t_k}^{t_{k+\sigma_k}}\frac 1{(t_{k+\sigma_k}-\tau)^{\alpha_{k+\sigma_k}}}\mathrm{d}\tau\nonumber\\
=&\frac{T^{-\alpha_{k+\sigma_k}}}{\Gamma(1-\alpha_{k+\sigma_k})}
  \sum_{i=\underline{N}+1}^{\overline{N}}\theta_{i}^{(k)}H_i^{(k)} + s^{(k)}\sigma_k^{1-\alpha_{k+\sigma_k}}\big(u(t_{k+1})-u(t_k)\big),
\end{align} }
where $H_i^{(k)}$ is the history part of the integral,
\begin{align*}
H_i^{(k)}=\int_{0}^{t_k}\big(L_2u(\tau)\big)' e^{-\lambda_i(t_{k+\sigma_k}-\tau)/T}\mathrm{d}\tau.
\end{align*}
We point out that $H_i^{(k)}$ can be calculated by a recursive relation and quadratic interpolation; i.e.,
\begin{align}\label{his}
H_i^{(k)}=&e^{-\lambda_i(1+\sigma_k-\sigma_{k-1})\Delta t/T} H_i^{(k-1)}+\int_{t_{k-1}}^{t_k}\big(L_2u(\tau)\big)'e^{-\lambda_i(t_{k+\sigma_k}-\tau)/T}\mathrm{d}\tau\nonumber\\
=&e^{-\lambda_i(1+\sigma_k-\sigma_{k-1})\Delta t/T} H_i^{(k-1)}+A_i^{(k)}\big(u(t_k)-u(t_{k-1})\big)+B_i^{(k)}\big(u(t_{k+1})-u(t_k)\big),
\end{align}
with $H_i^{(0)}=0$ $(i=\underline{N}+1,\ldots,\overline{N})$ and
\begin{align*}
A_i^{(k)}=\int_0^1\big(3/2-\tau\big)e^{-\lambda_i(\sigma_k+1-\tau)\Delta t/T}\mathrm{d}\tau,\ \ \ \
B_i^{(k)}=\int_0^1\big(\tau-1/2\big)e^{-\lambda_i(\sigma_k+1-\tau)\Delta t/T}\mathrm{d}\tau.
\end{align*}

Overall, for $0\leq k \leq n-1$, we define $FL2$-$1_{\sigma}$ formula for $\prescript{C}{0}{\mathcal{D}}^{\alpha(t)}_tu(t)|_{t=t_{k+\sigma_k}}$ by $\prescript{FH}{0}{\mathcal{D}}^{\alpha_{k+\sigma_k}}_tu(t_{k+\sigma_k})$ given as
\begin{align}
\prescript{FH}{0}{\mathcal{D}}^{\alpha_{k+\sigma_k}}_tu(t_{k+\sigma_k})
=\frac{T^{-\alpha_{k+\sigma_k}}}{\Gamma(1-\alpha_{k+\sigma_k})}\sum_{i=\underline{N}+1}^{\overline{N}}\theta_{i}^{(k)}H_i^{(k)}
 + s^{(k)}\sigma_k^{1-\alpha_{k+\sigma_k}}\big(u(t_{k+1})-u(t_k)\big),\label{FL21}
\end{align}
where $H_i^{(k)}$ is calculated by \eqref{his}.

In addition, for $k=0$, we let
\begin{align}\label{FL21-0}
\prescript{FH}{0}{\mathcal{D}}^{\alpha_{\sigma_0}}_tu(t_{\sigma_0})=\prescript{H}{0}{\mathcal{D}}^{\alpha_{\sigma_0}}_tu(t_{\sigma_0}).
\end{align}

We give the following lemma to state the local truncated error of $FL2$-$1_{\sigma}$ formula \eqref{FL21} for the VO Caputo fractional derivative $\prescript{C}{0}{\mathcal{D}}^{\alpha(t)}_tu(t)|_{t=t_{k+\sigma_k}}$.
\begin{lemma}\label{L-error-FL21}
Suppose $\alpha_{k+\sigma_k}\in(0,1)$ and $u(t)\in \mathcal{C}^3\big([0,T]\big)$. Let $L2$-$1_{\sigma}$ formula be as in (\ref{L21}), $FL2$-$1_{\sigma}$ formula be defined by \eqref{FL21}--\eqref{FL21-0} and $\epsilon$ be the expected accuracy. Then, we have
\begin{align}\label{error-FL21}
\prescript{C}{0}{\mathcal{D}}^{\alpha(t)}_tu(t)|_{t=t_{k+\sigma_k}}
=\prescript{FH}{0}{\mathcal{D}}^{\alpha_{k+\sigma_k}}_tu(t_{k+\sigma_k})+\mathcal{O}(\Delta t^{3-\alpha_{k+\sigma_k}}+\epsilon),\ \ \ \ 0\leq k \leq n-1.
\end{align}
\end{lemma}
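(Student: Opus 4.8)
The plan is to bound the total error by the triangle inequality, splitting it into the $L2$-$1_\sigma$ truncation error, which is already controlled by Lemma \ref{L-error-L21}, and the error introduced purely by the exponential-sum approximation, which I expect to be $\mathcal{O}(\epsilon)$. Concretely, I would write
\begin{align*}
\Big|\prescript{C}{0}{\mathcal{D}}^{\alpha(t)}_tu(t)|_{t=t_{k+\sigma_k}}-\prescript{FH}{0}{\mathcal{D}}^{\alpha_{k+\sigma_k}}_tu(t_{k+\sigma_k})\Big|
\leq r^k+\Big|\prescript{H}{0}{\mathcal{D}}^{\alpha_{k+\sigma_k}}_tu(t_{k+\sigma_k})-\prescript{FH}{0}{\mathcal{D}}^{\alpha_{k+\sigma_k}}_tu(t_{k+\sigma_k})\Big|,
\end{align*}
and invoke Lemma \ref{L-error-L21} to get $r^k=\mathcal{O}(\Delta t^{3-\alpha_{k+\sigma_k}})$ with a constant uniform in $k$ since $\alpha_{k+\sigma_k}\in[\underline{\alpha},\overline{\alpha}]\subset(0,1)$. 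For $k=0$ this is immediate: by the convention \eqref{FL21-0} the two discrete operators coincide, the second term vanishes, and only the Lemma \ref{L-error-L21} bound remains.

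For $1\le k\le n-1$ the key observation is that $\prescript{H}{0}{\mathcal{D}}$ and $\prescript{FH}{0}{\mathcal{D}}$ differ only through the history integral on $[0,t_k]$, because the local $L_1$ contribution on $[t_k,t_{k+\sigma_k}]$ is evaluated exactly in both formulas and cancels. Reading off \eqref{HL-parts}, the second term equals
\begin{align*}
\frac{T^{-\alpha_{k+\sigma_k}}}{\Gamma(1-\alpha_{k+\sigma_k})}\left|\int_{0}^{t_k}\big(L_2u(\tau)\big)'\left[\left(\frac{t_{k+\sigma_k}-\tau}{T}\right)^{-\alpha_{k+\sigma_k}}-\sum_{i=\underline{N}+1}^{\overline{N}}\theta_i^{(k)}e^{-\lambda_i(t_{k+\sigma_k}-\tau)/T}\right]\mathrm{d}\tau\right|.
\end{align*}
I would first bound $\big|(L_2u(\tau))'\big|$ uniformly by a constant depending only on $\max_{[0,T]}|u'|$ and $\max_{[0,T]}|u''|$ through the standard quadratic-interpolation estimate (here $u\in\mathcal{C}^3$ is more than enough). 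Then I split the integral as $\int_0^{t_{k-1}}+\int_{t_{k-1}}^{t_k}$. On $[0,t_{k-1}]$ the hypotheses of Lemma \ref{L-soe} hold, so the bracket is dominated in modulus by $\epsilon\,(\tfrac{t_{k+\sigma_k}-\tau}{T})^{-\alpha_{k+\sigma_k}}$; the residual integral $\int_0^{t_{k-1}}(t_{k+\sigma_k}-\tau)^{-\alpha_{k+\sigma_k}}\mathrm{d}\tau$ is bounded by $T^{1-\alpha_{k+\sigma_k}}/(1-\alpha_{k+\sigma_k})$ independently of $k$, which after multiplying by the prefactor \eqref{soe} gives an $\mathcal{O}(\epsilon)$ contribution uniform in $k$.

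The main obstacle is the near-diagonal sliver $[t_{k-1},t_k]$, where the hypothesis $\tau\le t_{k-1}$ of Lemma \ref{L-soe} fails and the relative bound \eqref{soe} is not directly available. On this interval the distance $t_{k+\sigma_k}-\tau$ still lies in $[\sigma_k\Delta t,(1+\sigma_k)\Delta t]$ with $\sigma_k>\tfrac12$, hence is bounded below by $\tfrac12\Delta t$; I would argue that the same exponential-sum quadrature with the parameters \eqref{parameter-choose} (whose $\overline{N}$ already scales with $\log(T/\Delta t)$) retains relative accuracy of order $\epsilon$ down to distances comparable to $\Delta t$, so that $\int_{t_{k-1}}^{t_k}$ likewise contributes $\mathcal{O}(\epsilon)$; alternatively one can extend Lemma \ref{L-soe} to $\tau\in[0,t_k]$ at the cost of enlarging $\overline{N}$ by an additive constant. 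Collecting the two pieces yields the $\mathcal{O}(\epsilon)$ bound for the second term, and adding the $\mathcal{O}(\Delta t^{3-\alpha_{k+\sigma_k}})$ from Lemma \ref{L-error-L21} gives \eqref{error-FL21}. Throughout, the only quantities to monitor are the uniformity of the constants in $k$, which is guaranteed by $T\ge1$, by $\alpha_{k+\sigma_k}$ being bounded away from $0$ and $1$, and by the boundedness of $\Gamma(1-\alpha_{k+\sigma_k})^{-1}$ and $(1-\alpha_{k+\sigma_k})^{-1}$ on $[\underline{\alpha},\overline{\alpha}]$.
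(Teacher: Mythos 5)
Your proposal is correct and follows essentially the same route as the paper's own proof: the triangle inequality through $\prescript{H}{0}{\mathcal{D}}^{\alpha_{k+\sigma_k}}_t$, Lemma \ref{L-error-L21} for the discretization part, the ESA bound \eqref{soe} for the kernel-replacement part on the history interval, and the $k=0$ case dispatched immediately by \eqref{FL21-0}. The one place you diverge works in your favor: the paper applies \eqref{soe} over all of $[0,t_k]$, including the final panel $[t_{k-1},t_k]$ where the stated hypothesis of Lemma \ref{L-soe} (distance at least $\Delta t$) fails since $t_{k+\sigma_k}-\tau$ can drop to $\sigma_k\Delta t<\Delta t$, whereas you isolate that sliver and justify it separately (e.g.\ by enlarging $\overline{N}$ by an additive constant), which is a legitimate repair of a detail the paper's proof silently glosses over.
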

\begin{proof}
Clearly, (\ref{FL21-0}) and Lemma \ref{L-error-L21} imply the lemma is valid for $k=0$. For $k\geq1$, according to \eqref{HL-parts} and \eqref{FL21}, $\prescript{H}{0}{\mathcal{D}}^{\alpha_{k+\sigma_k}}_tu(t_{k+\sigma_k})$ and $\prescript{FH}{0}{\mathcal{D}}^{\alpha_{k+\sigma_k}}_tu(t_{k+\sigma_k})$ just differ in the approximation for $\left(\frac{t_{k+\sigma_k}-\tau}T\right)^{-\alpha_{k+\sigma_k}}$, $\tau\in(0,t_{k+\sigma_k})$; i.e.,
\begin{align*}
&\Big|\prescript{C}{0}{\mathcal{D}}^{\alpha(t)}_tu(t)|_{t=t_{k+\sigma_k}}-\prescript{FH}{0}{\mathcal{D}}^{\alpha_{k+\sigma_k}}_tu(t_{k+\sigma_k})\Big|\\
\leq&\Big|\prescript{C}{0}{\mathcal{D}}^{\alpha(t)}_tu(t)|_{t=t_{k+\sigma_k}}-\prescript{H}{0}{\mathcal{D}}^{\alpha_{k+\sigma_k}}_tu(t_{k+\sigma_k})\Big|
     +\Big|\prescript{H}{0}{\mathcal{D}}^{\alpha_{k+\sigma_k}}_tu(t_{k+\sigma_k})-\prescript{FH}{0}{\mathcal{D}}^{\alpha_{k+\sigma_k}}_tu(t_{k+\sigma_k})\Big|\\
=&\mathcal{O}(\Delta t^{3-\alpha_{k+\sigma_k}})\\
  &+\frac {T^{-\alpha_{k+\sigma_k}}} {\Gamma(1-\alpha_{k+\sigma_k})}\sum_{l=1}^k\int_{t_{l-1}}^{t_l}
   \Bigg|\left(\frac{t_{k+\sigma_k}-\tau}T\right)^{-\alpha_{k+\sigma_k}}-\sum_{i=\underline{N}+1}^{\overline{N}}\theta_i^{(k)}e^{-\lambda_i(t_{k+\sigma_k}-\tau)/T}\Bigg|
   \big(L_2u(\tau)\big)'\mathrm{d}\tau\\
=&\mathcal{O}(\Delta t^{3-\alpha_{k+\sigma_k}})+\frac {t_k^{1-\alpha_{k+\sigma_k}}\max\limits_{0\leq t\leq t_k}|u'(t)|} {\Gamma(2-\alpha_{k+\sigma_k})}\mO(\epsilon).
\end{align*}
Since $t_{k+\sigma_k}\leq T$ and  $u(t)\in \mathcal{C}^3\big([0,T]\big)$, the proof is completed.
\end{proof}

From Lemma \ref{L-error-FL21}, we conclude that the approximation for the VO Caputo fractional derivative $\prescript{C}{0}{\mathcal{D}}^{\alpha(t)}_tu(t)$ at the point $t_{k+\sigma_k}$ by $\prescript{FH}{0}{\mathcal{D}}^{\alpha_{k+\sigma_k}}_tu(t_{k+\sigma_k})$ is at least second-order convergent if $u\in \mathcal{C}^3\big([0,T]\big)$.

\subsection{Properties of discrete kernels}
In order to prepare the subsequent analysis, we firstly show the properties of discrete kernels in $FL2$-$1_{\sigma}$ formula. The recursive relation of $H_i^{(k)}$ defined in \eqref{his} can be equivalently rewritten as the following form {\small
\begin{align*}
H_i^{(k)}
=&\sum_{l=1}^k\int_{t_{l-1}}^{t_l}\big(L_2u(\tau)\big)' e^{-\lambda_i(t_{k+\sigma_k}-\tau)/T}\mathrm{d}\tau\\
=&\Delta t^{-2}\bigg\{\int_{t_0}^{t_1}(t_{\frac 3 2}-\tau) e^{-\lambda_i(t_{k+\sigma_k}-\tau)/T}\mathrm{d}\tau\big(u(t_1)-u(t_0)\big)\\
&+\sum_{l=1}^{k-1}\left[\int_{t_{l-1}}^{t_l}(\tau-t_{l-\frac 1 2}) e^{-\lambda_i(t_{k+\sigma_k}-\tau)/T}\mathrm{d}\tau
 +\int_{t_l}^{t_{l+1}}(t_{l+\frac 3 2}-\tau) e^{-\lambda_i(t_{k+\sigma_k}-\tau)/T}\mathrm{d}\tau\right]\big(u(t_{l+1})-u(t_l)\big)\\
&+\int_{t_{k-1}}^{t_k}(\tau-t_{k-\frac 1 2}) e^{-\lambda_i(t_{k+\sigma_k}-\tau)/T}\mathrm{d}\tau\big(u(t_{k+1})-u(t_k)\big)\bigg\},
\end{align*} }
which leads to $\prescript{FH}{0}{\mathcal{D}}^{\alpha_{k+\sigma_k}}_tu(t_{k+\sigma_k})$ $(0\leq k\leq n-1)$ in the following {\small
\begin{align}
&\prescript{FH}{0}{\mathcal{D}}^{\alpha_{k+\sigma_k}}_tu(t_{k+\sigma_k})\nonumber\\
=&\frac{T^{-\alpha_{k+\sigma_k}}\Delta t^{-2}}{\Gamma(1-\alpha_{k+\sigma_k})}\sum_{i=\underline{N}+1}^{\overline{N}}\theta_{i}^{(k)}
  \bigg\{\int_{t_0}^{t_1}(t_{\frac 3 2}-\tau) e^{-\lambda_i(t_{k+\sigma_k}-\tau)/T}\mathrm{d}\tau\big(u(t_1)-u(t_0)\big)\nonumber\\
&+\sum_{l=1}^{k-1}\left[\int_{t_{l-1}}^{t_l}(\tau-t_{l-\frac 1 2}) e^{-\lambda_i(t_{k+\sigma_k}-\tau)/T}\mathrm{d}\tau
 +\int_{t_l}^{t_{l+1}}(t_{l+\frac 3 2}-\tau) e^{-\lambda_i(t_{k+\sigma_k}-\tau)/T}\mathrm{d}\tau\right]\big(u(t_{l+1})-u(t_l)\big)\nonumber\\
&+\int_{t_{k-1}}^{t_k}(\tau-t_{k-\frac 1 2}) e^{-\lambda_i(t_{k+\sigma_k}-\tau)/T}\mathrm{d}
  \tau\big(u(t_{k+1})-u(t_k)\big)\bigg\}+s^{(k)}\sigma_k^{1-\alpha_{k+\sigma_k}}\big(u(t_{k+1})-u(t_{k})\big)\nonumber\\
=&s^{(k)}\sum_{l=0}^k \rho_{l}^{(k)}\big(u(t_{k-l+1})-u(t_{k-l})\big),\label{FL21-2}
\end{align} }
where
\begin{align}\label{coe-FL21-1}
\rho_0^{(0)}=\sigma_0^{1-\alpha_{\sigma_0}},
\end{align}
and for $1 \leq k\leq n-1$, {\small
\begin{align}\label{coe-FL21}
\rho_l^{(k)}
&=\frac{\Delta t^{\alpha_{k+\sigma_k}-2}(1-\alpha_{k+\sigma_k})}{T^{\alpha_{k+\sigma_k}}}\cdot
\begin{cases}
\int_{t_{k-1}}^{t_k}(\tau-t_{k-\frac 1 2})\sum\limits_{i=\underline{N}+1}^{\overline{N}}\theta_{i}^{(k)} e^{-\lambda_i(t_{k+\sigma_k}-\tau)/T}\mathrm{d}\tau\\
\ \ \ \ +T^{\alpha_{k+\sigma_k}}\int_{t_k}^{t_{k+\sigma_k}}\frac{\Delta t}{(t_{k+\sigma_k}-\tau)^{\alpha_{k+\sigma_k}}}\mathrm{d}\tau,\ \ l=0,\\
\int_{t_{k-l-1}}^{t_{k-l}}(\tau-t_{k-l-\frac 1 2})\sum\limits_{i=\underline{N}+1}^{\overline{N}}\theta_{i}^{(k)} e^{-\lambda_i(t_{k+\sigma_k}-\tau)/T}\mathrm{d}\tau\\
\ \ \ \ +\int_{t_{k-l}}^{t_{k-l+1}}(t_{k-l+\frac 3 2}-\tau)\sum\limits_{i=\underline{N}+1}^{\overline{N}}\theta_{i}^{(k)}e^{-\lambda_i(t_{k+\sigma_k}-\tau)/T}\mathrm{d}\tau,\\
~~~~~~~~~~~~~~~~~~~~~~~~~~~~~~~~~~~~~~~~~~~~~~~~~~~~1\leq l\leq k-1,\\
\int_{t_0}^{t_1}(t_{\frac 3 2}-\tau)\sum\limits_{i=\underline{N}+1}^{\overline{N}}\theta_{i}^{(k)}e^{-\lambda_i(t_{k+\sigma_k}-\tau)/T}\mathrm{d}\tau,\ \ l=k.
  \end{cases}
\end{align} }

The coefficients $\{\rho_l^{(k)}|0\leq l\leq k; 0\leq k\leq n-1\}$ defined in \eqref{coe-FL21-1} and \eqref{coe-FL21} have the following properties, which play the vital roles in   studying   the stability and convergence of the finite difference schemes. Below we present the relationship between $\rho_l^{(k)}$ and $\prescript{}{}{g}_l^{(k)}$.

\begin{lemma}\label{coefficient1}
For $\alpha_{k+\sigma_k}\in(0,1)$, $g_l^{(k)}$ $(0\leq l\leq k; 0\leq k\leq n-1)$ defined in \eqref{L21}, $\rho_l^{(k)}$ $(0\leq l\leq k; 0\leq k\leq n-1)$ defined in \eqref{coe-FL21-1} and \eqref{coe-FL21} with $\sigma_k=1-\frac {\alpha_{k+\sigma_k}}2$. Then, we have
\begin{align*}
\rho_0^{(0)}=\prescript{ }{}{g}_0^{(0)},
\end{align*}
and for $1\leq k\leq n-1$,
\begin{align}\label{coe}
\Big|\rho_l^{(k)}-\prescript{ }{}{g}_l^{(k)}\Big|
\leq\frac{1-\alpha_{k+\sigma_k}}{\Delta t^{\alpha_{k+\sigma_k}}}\cdot
\begin{cases}
\frac \epsilon 4,\ \ l=0,\\
\frac {5\epsilon} 4,\ \ 1\leq l\leq k-1,\\
\ \epsilon,\ \ l=k.
\end{cases}
\end{align}
\end{lemma}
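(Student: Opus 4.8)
The plan is to prove the two assertions separately: the $k=0$ identity by direct inspection, and the estimate \eqref{coe} for $1\le k\le n-1$ by comparing the two coefficient families piece by piece through the \emph{relative} exponential-sum error bound of Lemma \ref{L-soe}.

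For $k=0$ there is nothing to estimate: both $\rho_0^{(0)}$ and $g_0^{(0)}$ are \emph{defined} to equal $\sigma_0^{1-\alpha_{\sigma_0}}$ (compare \eqref{coe-FL21-1} with the line following \eqref{L21}), so $\rho_0^{(0)}=g_0^{(0)}$ holds by definition. For $1\le k\le n-1$ I would form $\rho_l^{(k)}-g_l^{(k)}$ directly from \eqref{L21} and \eqref{coe-FL21}. The key structural observation is that the two families share the common prefactor $\Delta t^{\alpha_{k+\sigma_k}-2}(1-\alpha_{k+\sigma_k})$ and differ \emph{only} in the kernel: $g_l^{(k)}$ carries the exact kernel $(t_{k+\sigma_k}-\tau)^{-\alpha_{k+\sigma_k}}$, while $\rho_l^{(k)}$ carries $T^{-\alpha_{k+\sigma_k}}\sum_i\theta_i^{(k)}e^{-\lambda_i(t_{k+\sigma_k}-\tau)/T}$. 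Moreover, in the $l=0$ case the purely local integral $\int_{t_k}^{t_{k+\sigma_k}}\Delta t\,(t_{k+\sigma_k}-\tau)^{-\alpha_{k+\sigma_k}}\mathrm d\tau$ occurs identically in both (the factor $T^{\alpha_{k+\sigma_k}}$ in \eqref{coe-FL21} cancels the prefactor $T^{-\alpha_{k+\sigma_k}}$) and so drops out, leaving only the history integral. Hence in every case $\rho_l^{(k)}-g_l^{(k)}$ equals the shared prefactor times an integral of the interpolation weight against the kernel error $E(\tau):=T^{-\alpha_{k+\sigma_k}}\sum_i\theta_i^{(k)}e^{-\lambda_i(t_{k+\sigma_k}-\tau)/T}-(t_{k+\sigma_k}-\tau)^{-\alpha_{k+\sigma_k}}$.

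Next I would invoke \eqref{soe}, which after multiplying through by $T^{-\alpha_{k+\sigma_k}}$ gives the pointwise bound $|E(\tau)|\le \epsilon\,(t_{k+\sigma_k}-\tau)^{-\alpha_{k+\sigma_k}}$. Moving absolute values inside the integral then yields, for each $l$,
\[
\big|\rho_l^{(k)}-g_l^{(k)}\big|\le \epsilon\,\Delta t^{\alpha_{k+\sigma_k}-2}(1-\alpha_{k+\sigma_k})\int |w_l(\tau)|\,(t_{k+\sigma_k}-\tau)^{-\alpha_{k+\sigma_k}}\,\mathrm d\tau,
\]
where $w_l$ denotes the relevant linear weight. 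On each subinterval I would bound the kernel by its largest value, attained at the endpoint nearest to $t_{k+\sigma_k}$, and evaluate the elementary weight integrals: the midpoint-centred weight $\tau-t_{k-l-1/2}$ gives $\int|w_l|\,\mathrm d\tau=\Delta t^2/4$, whereas the shifted weight $t_{k-l+3/2}-\tau$ gives $\int|w_l|\,\mathrm d\tau=\Delta t^2$. This is precisely the source of the three constants: $l=0$ keeps only a centred piece ($\tfrac14$); $1\le l\le k-1$ adds a centred piece and a shifted piece ($\tfrac14+1=\tfrac54$); and $l=k$ keeps only a shifted piece ($1$). Finally, since $\sigma_k=1-\tfrac{\alpha_{k+\sigma_k}}2>\tfrac12$, every closest-endpoint distance is at least $\sigma_k\Delta t$, so each kernel maximum is at most $(\sigma_k\Delta t)^{-\alpha_{k+\sigma_k}}$; combined with the prefactor $\Delta t^{\alpha_{k+\sigma_k}-2}$ and the $\Delta t^2$ from the weight integral, the powers of $\Delta t$ collapse, and for a sufficiently fine mesh ($\Delta t\le\sigma_k$) one has $\sigma_k^{-\alpha_{k+\sigma_k}}\le\Delta t^{-\alpha_{k+\sigma_k}}$, producing the factor $(1-\alpha_{k+\sigma_k})/\Delta t^{\alpha_{k+\sigma_k}}$ of \eqref{coe}.

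The main obstacle I anticipate is the subinterval immediately adjacent to the current level, namely $[t_{k-1},t_k]$, which enters the $l=0$ coefficient and (through its second piece) the $l=1$ coefficient: there the distance $t_{k+\sigma_k}-\tau$ decreases to $\sigma_k\Delta t<\Delta t$, sitting at (indeed just below) the lower edge $\Delta t/T$ of the range on which Lemma \ref{L-soe} guarantees \eqref{soe}. Making the argument fully rigorous requires verifying that the relative bound $|E(\tau)|\le\epsilon(t_{k+\sigma_k}-\tau)^{-\alpha_{k+\sigma_k}}$ persists on this near-singular strip (or estimating the strip separately), and it is also where the clean constants must be extracted without overcounting the kernel's growth; the vanishing mean of the centred weight $\tau-t_{k-l-1/2}$ is the feature that keeps the $l=0$ constant as small as $\tfrac14$.
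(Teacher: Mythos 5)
Your proposal is correct and is essentially the paper's own proof: it cancels the shared local term in the $l=0$ case, applies the relative ESA bound \eqref{soe} pointwise under the integral, and then bounds the kernel by its maximum (attained at distance at least $\sigma_k\Delta t$ from $t_{k+\sigma_k}$) while the weight integrals contribute exactly $\Delta t^2/4$ and $\Delta t^2$, which is precisely how the paper arrives at the constants $\frac{1}{4}$, $\frac{1}{4}+1=\frac{5}{4}$, and $1$ (the paper phrases the kernel bound as $(\sigma_k+1-\tau)^{-\alpha_{k+\sigma_k}}\le\Delta t^{-\alpha_{k+\sigma_k}}$ after rescaling to $[0,1]$, which is the same step as your $\sigma_k^{-\alpha_{k+\sigma_k}}\le\Delta t^{-\alpha_{k+\sigma_k}}$ under the same implicit mesh condition $\Delta t\le\sigma_k$). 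The ``obstacle'' you flag at the end is genuine but is not resolved by the paper either: its proof applies \eqref{soe} on $[t_{k-1},t_k]$ (in the $l=0$ and $l=1$ estimates), where $t_{k+\sigma_k}-\tau$ drops to $\sigma_k\Delta t<\Delta t$, even though Lemma \ref{L-soe} only asserts that bound for $\tau\in[0,t_{k-1}]$, i.e., for distances at least $\Delta t$; so your extra caution on that near-singular strip goes beyond, rather than falls short of, the published argument.
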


\begin{proof}
For $k=0$, $\rho_0^{(0)}=g_0^{(0)}$ is obvious. For $k\geq 1$, the ESA approximation \eqref{soe} with \eqref{L21} and \eqref{FL21-2} gives
\begin{align*}
\Big|\rho_0^{(k)}-g_0^{(k)}\Big|
\leq&\frac{\Delta t^{\alpha_{k+\sigma_k}-2}(1-\alpha_{k+\sigma_k})}{T^{\alpha_{k+\sigma_k}}}\cdot \Bigg\{\int_{t_{k-1}}^{t_k}
     \big|\tau-t_{k-\frac 1 2}\big|\\
     &\cdot\bigg[\Big(\frac{t_{k+\sigma_k}-\tau}{T}\Big)^{-\alpha_{k+\sigma_k}}
     -\sum\limits_{i=\underline{N}+1}^{\overline{N}}\theta_{i}^{(k)}e^{-\lambda_i(t_{k+\sigma_k}-\tau)\Delta t/T}\bigg]\mathrm{d}\tau\Bigg\}\nonumber\\
\leq&\Delta t^{\alpha_{k+\sigma_k}-2}(1-\alpha_{k+\sigma_k})\epsilon\cdot\int_{t_{k-1}}^{t_k}\big|\tau-t_{k-\frac 1 2}\big|
     (t_{k+\sigma_k}-\tau)^{-\alpha_{k+\sigma_k}}\mathrm{d}\tau\nonumber\\
=&(1-\alpha_{k+\sigma_k})\epsilon\cdot\int_0^1\big|\tau-1/2\big|(\sigma_k-\tau+1)^{-\alpha_{k+\sigma_k}}\mathrm{d}\tau\nonumber\\
\leq&(1-\alpha_{k+\sigma_k})\Delta t^{-\alpha_{k+\sigma_k}}\epsilon\cdot\int_0^1\big|\tau-1/2\big|\mathrm{d}\tau\nonumber\\
=&\frac \epsilon 4(1-\alpha_{k+\sigma_k})\Delta t^{-\alpha_{k+\sigma_k}}.
\end{align*}

For $1\leq l\leq k-1$, we obtain
\begin{align*}
&\Big|\rho_l^{(k)}-g_l^{(k)}\Big|\nonumber\\
\leq&\frac{\Delta t^{\alpha_{k+\sigma_k}-2}(1-\alpha_{k+\sigma_k})}{T^{\alpha_{k+\sigma_k}}}\cdot\Bigg\{\int_{t_{k-l-1}}^{t_{k-l}}
     \big|\tau-t_{k-l-\frac 1 2}\big|\cdot\\
    &\bigg[\Big(\frac{t_{k+\sigma_k}-\tau}{T}\Big)^{-\alpha_{k+\sigma_k}}
     -\sum\limits_{i=\underline{N}+1}^{\overline{N}}\theta_{i}^{(k)}e^{-\lambda_i(t_{k+\sigma_k}-\tau)\Delta t/T}\bigg]\mathrm{d}\tau\nonumber\\
&+\int_{t_{k-l}}^{t_{k-l+1}}\big|t_{k-l+\frac 3 2}-\tau\big|\bigg[\Big(\frac{t_{k+\sigma_k}-\tau}{T}\Big)^{-\alpha_{k+\sigma_k}}
 -\sum\limits_{i=\underline{N}+1}^{\overline{N}}\theta_{i}^{(k)}e^{-\lambda_i(t_{k+\sigma_k}-\tau)\Delta t/T}\bigg]\mathrm{d}\tau\Bigg\}\nonumber\\
\leq&\Delta t^{\alpha_{k+\sigma_k}-2}(1-\alpha_{k+\sigma_k})\epsilon\cdot\Bigg\{\int_{t_{k-l-1}}^{t_{k-l}}\big|\tau-t_{k-l-\frac 1 2}\big|
     (t_{k+\sigma_k}-\tau)^{-\alpha_{k+\sigma_k}}\mathrm{d}\tau\nonumber\\
&+\int_{t_{k-l}}^{t_{k-l+1}}\big|t_{k-l+\frac 3 2}-\tau\big|(t_{k+\sigma_k}-\tau)^{-\alpha_{k+\sigma_k}}\mathrm{d}\tau\Bigg\}\nonumber\\
=&(1-\alpha_{k+\sigma_k})\epsilon\cdot\Big\{\int_0^1\big|\tau-1/2\big|(\sigma_k-\tau+l+1)^{-\alpha_{k+\sigma_k}}\mathrm{d}\tau
  +\int_0^1\big|3/2-\tau\big|(\sigma_k-\tau+l)^{-\alpha_{k+\sigma_k}}\mathrm{d}\tau\Big\}\nonumber\\
\leq&(1-\alpha_{k+\sigma_k})\Delta t^{-\alpha_{k+\sigma_k}}\epsilon\cdot\Big\{\int_0^1\big|3/2-\tau\big|\mathrm{d}\tau+\int_0^1\big|\tau
     -1/2\big|\mathrm{d}\tau\nonumber\Big\}\\
=&\frac {5\epsilon} 4(1-\alpha_{k+\sigma_k})\Delta t^{-\alpha_{k+\sigma_k}}.
\end{align*}
Similarly, we have
\begin{align*}
&\Big|\rho_k^{(k)}-g_k^{(k)}\Big|\nonumber\\
\leq&\frac{\Delta t^{\alpha_{k+\sigma_k}-2}(1-\alpha_{k+\sigma_k})}{T^{\alpha_{k+\sigma_k}}}\cdot\Bigg\{\int_{t_0}^{t_1}\big|t_{\frac 3 2}-\tau\big|
     \bigg[\Big(\frac{t_{k+\sigma_k}-\tau}{T}\Big)^{-\alpha_{k+\sigma_k}}
     -\sum\limits_{i=\underline{N}+1}^{\overline{N}}\theta_{i}^{(k)}e^{-\lambda_i(t_{k+\sigma_k}-\tau)\Delta t/T}\bigg]\mathrm{d}\tau\Bigg\}\nonumber\\
\leq&\Delta t^{\alpha_{k+\sigma_k}-2}(1-\alpha_{k+\sigma_k})\epsilon\cdot\int_{t_0}^{t_1}\big|t_{\frac 3 2}
     -\tau\big|(t_{k+\sigma_k}-\tau)^{-\alpha_{k+\sigma_k}}\mathrm{d}\tau\nonumber\\
=&(1-\alpha_{k+\sigma_k})\epsilon \cdot\int_0^1\big|3/2-\tau\big|(k+\sigma_k-\tau)^{-\alpha_{k+\sigma_k}}\mathrm{d}\tau\nonumber\\
\leq&(1-\alpha_{k+\sigma_k})\Delta t^{-\alpha_{k+\sigma_k}}\epsilon\cdot\int_0^1\big|3/2-\tau\big|\mathrm{d}\tau\nonumber\\
=&\epsilon(1-\alpha_{k+\sigma_k})\Delta t^{-\alpha_{k+\sigma_k}}.
\end{align*}
The proof is complete.
\end{proof}

\begin{lemma}\label{L-coefficient}
For $\alpha_{k+\sigma_k}\in(0,1)$, $\rho_l^{(k)}$ $(0\leq l\leq k; 0\leq k\leq n-1)$ are defined in \eqref{coe-FL21-1} and \eqref{coe-FL21} with $\sigma_k=1-\frac {\alpha_{k+\sigma_k}}2$ and a sufficiently small $\epsilon$ satisfying
\begin{align*}
\epsilon
\leq\frac{2(1-\overline{\alpha})(2-\frac 1 2 \overline{\alpha})^{1-\overline{\alpha}}\Delta t^{\overline{\alpha}}}{(6-\frac7 2\underline{\alpha})(1-\frac 1 2 \underline{\alpha})}.
\end{align*}
Then, we have
\begin{align}
&0<\frac{(1-\epsilon)(1-\alpha_{k+\sigma_k})} {2(k+\sigma_k)^{\alpha_{k+\sigma_k}}}<\rho_k^{(k)}<\rho_{k-1}^{(k)}<\ldots<\rho_0^{(k)},\label{coe1}\\
&(2\sigma_k-1)\rho_0^{(k)}-\sigma_k\rho_1^{(k)}\geq0.\label{coe2}
\end{align}
\end{lemma}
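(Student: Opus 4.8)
The plan is to transfer to $\rho_l^{(k)}$ the three structural facts known for the $L2$-$1_\sigma$ coefficients $g_l^{(k)}$ from \cite{Du-2020} — namely, under $\sigma_k=1-\frac{\alpha_{k+\sigma_k}}{2}$, the lower bound $g_k^{(k)}>\frac{1-\alpha_{k+\sigma_k}}{2}(k+\sigma_k)^{-\alpha_{k+\sigma_k}}$, the strict monotonicity $g_0^{(k)}>g_1^{(k)}>\cdots>g_k^{(k)}$, and a quantitative form of $(2\sigma_k-1)g_0^{(k)}-\sigma_k g_1^{(k)}\ge 0$ — since \eqref{coe1} and \eqref{coe2} are precisely their $\rho$-analogues. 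Each of the three calls for a different device: the relative exponential-sum bound \eqref{soe} for the lower bound, a structural (not perturbative) argument for the monotonicity, and the closeness estimate of Lemma \ref{coefficient1} for \eqref{coe2}.

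For the lower bound in \eqref{coe1} I would avoid the absolute estimate of Lemma \ref{coefficient1} and instead use \eqref{soe} one-sidedly. In the $l=k$ case of \eqref{coe-FL21} the ESA kernel is integrated against the weight $t_{3/2}-\tau$, which is strictly positive on $[t_0,t_1]$, so $\sum_{i}\theta_i^{(k)}e^{-\lambda_i(t_{k+\sigma_k}-\tau)/T}\ge(1-\epsilon)\big(\frac{t_{k+\sigma_k}-\tau}{T}\big)^{-\alpha_{k+\sigma_k}}$ integrates to $\rho_k^{(k)}\ge(1-\epsilon)g_k^{(k)}$; combined with the known bound on $g_k^{(k)}$ this gives the stated estimate, which is positive since $\epsilon\le 1/e<1$. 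The monotonicity I would obtain structurally: the ESA kernel is a positive combination of the exponentials $e^{-\lambda_i(t_{k+\sigma_k}-\tau)/T}$ ($\theta_i^{(k)},\lambda_i>0$), each positive, increasing and convex in $\tau$ — exactly the features of $(t_{k+\sigma_k}-\tau)^{-\alpha_{k+\sigma_k}}$ on which the monotonicity proof of \cite{Du-2020} relies — so re-running that proof with the ESA kernel in place of the power kernel yields $\rho_0^{(k)}>\rho_1^{(k)}>\cdots>\rho_k^{(k)}$, and positivity of every $\rho_l^{(k)}$ then follows from $\rho_k^{(k)}>0$. A perturbative route to monotonicity is not available here, because the gaps $g_{l-1}^{(k)}-g_l^{(k)}$ decay to $0$ as $l\to k$ while the Lemma \ref{coefficient1} error is a fixed $\mathcal{O}(\epsilon\,\Delta t^{-\alpha_{k+\sigma_k}})$ quantity.

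For \eqref{coe2} the perturbative route is the correct one. I would write $(2\sigma_k-1)\rho_0^{(k)}-\sigma_k\rho_1^{(k)}=\big[(2\sigma_k-1)g_0^{(k)}-\sigma_k g_1^{(k)}\big]+(2\sigma_k-1)(\rho_0^{(k)}-g_0^{(k)})-\sigma_k(\rho_1^{(k)}-g_1^{(k)})$ and, using $2\sigma_k-1=1-\alpha_{k+\sigma_k}$, $\sigma_k=1-\frac12\alpha_{k+\sigma_k}$ and Lemma \ref{coefficient1}, bound the two perturbation terms by $\big(\frac{1-\alpha_{k+\sigma_k}}{4}+\frac{5(1-\frac12\alpha_{k+\sigma_k})}{4}\big)\frac{(1-\alpha_{k+\sigma_k})\epsilon}{\Delta t^{\alpha_{k+\sigma_k}}}=\frac{(6-\frac72\alpha_{k+\sigma_k})(1-\alpha_{k+\sigma_k})\epsilon}{4\,\Delta t^{\alpha_{k+\sigma_k}}}$; the combination $\frac{6-\frac72\alpha_{k+\sigma_k}}{4}$ is exactly what produces the factor $6-\frac72\underline{\alpha}$ in the hypothesis on $\epsilon$. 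It then suffices to bound $(2\sigma_k-1)g_0^{(k)}-\sigma_k g_1^{(k)}$ below; once a lower bound of the form $\frac{(1-\alpha_{k+\sigma_k})^2(2-\frac12\alpha_{k+\sigma_k})^{1-\alpha_{k+\sigma_k}}}{2\sigma_k}$ is available, the monotone reductions $1-\alpha_{k+\sigma_k}\ge1-\overline{\alpha}$, $(2-\frac12\alpha_{k+\sigma_k})^{1-\alpha_{k+\sigma_k}}\ge(2-\frac12\overline{\alpha})^{1-\overline{\alpha}}$, $\sigma_k\le1-\frac12\underline{\alpha}$ and $\Delta t^{\alpha_{k+\sigma_k}}\ge\Delta t^{\overline{\alpha}}$ convert the required inequality into exactly the stated bound on $\epsilon$, giving $(2\sigma_k-1)\rho_0^{(k)}-\sigma_k\rho_1^{(k)}\ge0$.

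The hard part will be this last lower bound on $(2\sigma_k-1)g_0^{(k)}-\sigma_k g_1^{(k)}$. Computing $g_0^{(k)}$ and $g_1^{(k)}$ through the substitution $\tau=t_{k-1}+s\Delta t$ reduces them to one-dimensional integrals whose antiderivatives, e.g.\ $\int_0^1(\sigma_k+1-s)^{-\alpha_{k+\sigma_k}}\mathrm{d}s=\frac{(2-\frac12\alpha_{k+\sigma_k})^{1-\alpha_{k+\sigma_k}}-\sigma_k^{1-\alpha_{k+\sigma_k}}}{1-\alpha_{k+\sigma_k}}$, are the source of the factor $(2-\frac12\overline{\alpha})^{1-\overline{\alpha}}$ in the numerator; the delicate point is controlling the near-cancellation between the $g_0^{(k)}$ and $g_1^{(k)}$ contributions so that their weighted difference stays positive and admits a clean explicit lower bound. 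If \cite{Du-2020} already records a quantitative version of this inequality, it can be cited directly and the remaining steps become routine.
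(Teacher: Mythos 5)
Your architecture coincides with the paper's: \eqref{coe1} via the one-sided use of \eqref{soe} to get $\rho_k^{(k)}\ge(1-\epsilon)g_k^{(k)}$ combined with $g_k^{(k)}\ge\frac{1-\alpha_{k+\sigma_k}}{2}(k+\sigma_k)^{-\alpha_{k+\sigma_k}}$ from \cite{Du-2020}; monotonicity by a structural argument; and \eqref{coe2} perturbatively through Lemma \ref{coefficient1}, with exactly the paper's aggregated error constant $\frac{(6-\frac{7}{2}\alpha_{k+\sigma_k})(1-\alpha_{k+\sigma_k})}{4}\epsilon\,\Delta t^{-\alpha_{k+\sigma_k}}$. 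Two of your steps, however, do not hold up. The smaller problem is the claim that re-running the monotonicity proof of \cite{Du-2020} with the ESA kernel yields the whole chain $\rho_0^{(k)}>\rho_1^{(k)}>\cdots>\rho_k^{(k)}$. Positivity, monotonicity and convexity of the kernel do give $\rho_1^{(k)}>\cdots>\rho_k^{(k)}>0$ (the paper gets this even more directly: by \eqref{his} and \eqref{coe-FL21}, each $\rho_l^{(k)}$ with $1\le l\le k$ is a positive combination of $A_i^{(k)},B_i^{(k)}>0$ whose $l$-dependence sits only in the strictly decreasing factors $e^{-\lambda_i l\Delta t/T}$), but they cannot give the first link: replace the kernel by the constant $1$ (weakly increasing and convex) — the odd weight $\tau-t_{k-1/2}$ annihilates the history integral in $g_0^{(k)}$ and one finds $g_0^{(k)}=\sigma_k g_1^{(k)}<g_1^{(k)}$. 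So $\rho_0^{(k)}>\rho_1^{(k)}$ genuinely needs the quantitative blow-up of the kernel near $t_{k+\sigma_k}$, and the paper accordingly obtains it only as a corollary of \eqref{coe2}, since $(2\sigma_k-1)\rho_0^{(k)}\ge\sigma_k\rho_1^{(k)}$ together with $2\sigma_k-1<\sigma_k$ forces $\rho_0^{(k)}>\rho_1^{(k)}$. This is repairable inside your own plan, because you prove \eqref{coe2} independently, but the step as you state it is wrong.

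The genuine gap is in \eqref{coe2} itself: the entire burden rests on the lower bound for $(2\sigma_k-1)g_0^{(k)}-\sigma_k g_1^{(k)}$, which you do not prove but only guess, and the guessed form is false. The paper closes this step by an exact computation from \eqref{L21}: for $k=1$, $(2\sigma_k-1)g_0^{(k)}-\sigma_k g_1^{(k)}=\frac{(2\sigma_k-1)(1-\sigma_k)}{2\sigma_k}(1+\sigma_k)^{1-\alpha_{k+\sigma_k}}$, with an analogous expression and a lower bound carrying an extra factor $(1+\sigma_k)^{-1}$ for $k\ge2$. The decisive factor is $(2\sigma_k-1)(1-\sigma_k)=(1-\alpha_{k+\sigma_k})\cdot\frac{\alpha_{k+\sigma_k}}{2}$, whereas your ansatz $\frac{(1-\alpha_{k+\sigma_k})^2(2-\frac12\alpha_{k+\sigma_k})^{1-\alpha_{k+\sigma_k}}}{2\sigma_k}$ carries $(1-\alpha_{k+\sigma_k})^2$; since $\frac{\alpha_{k+\sigma_k}}{2}<1-\alpha_{k+\sigma_k}$ whenever $\alpha_{k+\sigma_k}<\frac{2}{3}$, your claimed lower bound exceeds the true value of the quantity it is supposed to bound from below, so it cannot be proved. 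You reverse-engineered it from the stated hypothesis on $\epsilon$, and the mismatch you would have run into is real: in the paper's own final algebra the factor $1-\sigma_k$ is silently replaced by $2\sigma_k-1$ when passing from its intermediate inequalities to the displayed $\epsilon$-threshold, so a faithful completion of this perturbative argument supports only a threshold proportional to $\underline{\alpha}\,\Delta t^{\overline{\alpha}}$ (through $1-\sigma_k=\frac{\alpha_{k+\sigma_k}}{2}$), not the stated one proportional to $(1-\overline{\alpha})\Delta t^{\overline{\alpha}}$. In short, the one inequality your plan defers — and hopes to cite from \cite{Du-2020} — is precisely where the work lies, and in the form you propose it is untenable.
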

\begin{proof}
For $k=0$, $\rho_0^{(0)}=g_0^{(0)}>0$ is obvious. For $k\geq 1$, from \eqref{coe-FL21}, the coefficients can be rewritten as the following form
\begin{align*}
\rho_l^{(k)}
&=\frac{\Delta t^{\alpha_{k+\sigma_k}}(1-\alpha_{k+\sigma_k})}{T^{\alpha_{k+\sigma_k}}}
\left\{
\begin{array}{ll}
\sum\limits_{i=\underline{N}+1}^{\overline{N}}\theta_{i}^{(k)}B_i^{(k)}
+\Delta t^{-1}T^{\alpha_{k+\sigma_k}}\int_{t_k}^{t_{k+\sigma_k}}\frac{1}{(t_{k+\sigma_k}-\tau)^{\alpha_{k+\sigma_k}}}\mathrm{d}\tau, \ \ l=0,\\
\sum\limits_{i=\underline{N}+1}^{\overline{N}}\theta_{i}^{(k)}\Big(e^{-\lambda_i(l-1)\Delta t/T}A_i^{(k)}
+e^{-\lambda_il\Delta t/T}B_i^{(k)}\Big),\ \ 1\leq l\leq k-1,\\
\sum\limits_{i=\underline{N}+1}^{\overline{N}}\theta_{i}^{(k)}e^{-\lambda_i(k-1)\Delta t/T}A_i^{(k)},\ \ l=k,
  \end{array}
\right.
\end{align*}
where $A_i^{(k)}$ and $B_i^{(k)}$ are defined by \eqref{his}. Thanks to $\theta_i^{(k)}>0$, $\lambda_i>0$ and the monotonicity of $e^{\tau\lambda_i\Delta t/T }$ with respect to $\tau$, $A_i^{(k)}>0$, $B_i^{(k)}>0$ hold and then
\begin{align*}
0<\rho_{k}^{(k)}<\rho_{k-1}^{(k)}<\rho_{k-2}^{(k)}<\ldots<\rho_1^{(k)}.
\end{align*}
According to $\prescript{}{}{g}_k^{(k)}\geq \frac{(1-\alpha_{k+\sigma_k})}{2(k+\sigma_k)^{\alpha_{k+\sigma_k}}}$ \cite{Du-2020}, \eqref{L21},\eqref{soe}, and \eqref{coe-FL21}, we have
\begin{align*}
&\rho_k^{(k)}\geq (1-\epsilon)\prescript{}{}{g}_k^{(k)}\geq \frac{(1-\epsilon)(1-\alpha_{k+\sigma_k})} {2 (k+\sigma_k)^{\alpha_{k+\sigma_k}}}.
\end{align*}

So condition \eqref{coe1} will hold if $\rho_0^{(k)}>\rho_1^{(k)}$ holds, an inequality which obviously follows from the condition \eqref{coe2}. Thus it is enough to prove the later one. By \eqref{coe}, the left-hand side of the condition \eqref{coe2} satisfies
\begin{align}\label{coe6}
&(2\sigma_k-1)\rho_0^{(k)}-\sigma_k\rho_1^{(k)}\nonumber\\
\geq &(2\sigma_k-1)\prescript{}{}{g}_0^{(k)}-\sigma_k\prescript{}{}{g}_1^{(k)}-(2\sigma_k-1)\frac {(1-\alpha_{k+\sigma_k})\Delta t^{-\alpha_{k+\sigma_k}}\epsilon }{4}
      -(1-\alpha_{k+\sigma_k})\Delta t^{-\alpha_{k+\sigma_k}}\epsilon
\end{align}
for $k=1$, and
\begin{align}\label{coe7}
&(2\sigma_k-1)\rho_0^{(k)}-\sigma_k\rho_1^{(k)}\nonumber\\
\geq & (2\sigma_k-1)\prescript{}{}{g}_0^{(k)}-\sigma_k\prescript{}{}{g}_1^{(k)}-(2\sigma_k-1)\frac {(1-\alpha_{k+\sigma_k})\Delta t^{-\alpha_{k+\sigma_k}}\epsilon }{4}
       -\sigma_k\frac {5(1-\alpha_{k+\sigma_k})\Delta t^{-\alpha_{k+\sigma_k}}\epsilon }{4}
\end{align}
for $ k\geq2$.

From \eqref{L21}, the corresponding property about $\prescript{ }{}{g}_l^{(k)}$ is
\begin{align*}
(2\sigma_k-1)\prescript{ }{}{g}_0^{(k)}-\sigma_k\prescript{ }{}{g}_1^{(k)}
=&\left\{
\begin{array}{ll}
\frac{(2\sigma_k-1)(1-\sigma_k)}{2\sigma_k}(1+\sigma_k)^{1-\alpha_{k+\sigma_k}},\ \ k=1,\\
(1+\sigma_k)^{1-\alpha_{k+\sigma_k}}\Big(\frac{4\sigma_k-1}{2\sigma_k}-\big(\frac{2+\sigma_k}{1+\sigma_k}\big)^{1-\alpha_{k+\sigma_k}}\Big),\ \ k\geq2.
  \end{array}
\right.\\
\geq&\left\{
\begin{array}{ll}
\frac{(2\sigma_k-1)(1-\sigma_k)}{2\sigma_k(1+\sigma_k)^{\alpha_{k+\sigma_k}-1}},\ \ k=1,\\
\frac{(2\sigma_k-1)(1-\sigma_k)}{2\sigma_k(1+\sigma_k)^{\alpha_{k+\sigma_k}}},\ \ k\geq2,
  \end{array}
\right.\\
>&0.
\end{align*}
Thus in order to make the condition \eqref{coe2} hold we just need
\begin{align*}
\frac{(2\sigma_k-1)(1-\sigma_k)}{2\sigma_k(1+\sigma_k)^{\alpha_{k+\sigma_k}-1}}-(2\sigma_k-1)\frac {(1-\alpha_{k+\sigma_k})\Delta t^{-\alpha_{k+\sigma_k}}\epsilon }{4}-(1-\alpha_{k+\sigma_k})\Delta t^{-\alpha_{k+\sigma_k}}\epsilon \geq0,\ \ k=1,
\end{align*}
and
\begin{align*}
\frac{(2\sigma_k-1)(1-\sigma_k)}{2\sigma_k(1+\sigma_k)^{\alpha_{k+\sigma_k}}}-(2\sigma_k-1)\frac {(1-\alpha_{k+\sigma_k})\Delta t^{-\alpha_{k+\sigma_k}}\epsilon }{4}-\sigma_k\frac {5(1-\alpha_{k+\sigma_k})\Delta t^{-\alpha_{k+\sigma_k}}\epsilon }{4}\geq0,\ \ k\geq2,
\end{align*}
that is
\begin{align*}
\epsilon\leq \frac{2(2\sigma_k-1)\Delta t^{\alpha_{k+\sigma_k}}}{\sigma_k(1+\sigma_k)^{\alpha_{k+\sigma_k}-1}} \min\bigg\{\frac 1{6\sigma_k-1}, \frac 1{7\sigma_k-1}\bigg\}.
\end{align*}
In $FL2$-$1_{\sigma}$ formula, $\sigma_k=1-\frac{\alpha_{k+\sigma_k}}2$. Thus we have
\begin{align*}
\epsilon\leq \frac{2(1-\alpha_{k+\sigma_k})\Delta t^{\alpha_{k+\sigma_k}}}{(6-\frac7 2\alpha_{k+\sigma_k})(1-\frac 1 2 \alpha_{k+\sigma_k})(2-\frac 1 2 \alpha_{k+\sigma_k})^{\alpha_{k+\sigma_k}-1}}.
\end{align*}
So we get \eqref{coe2} if this $\epsilon$ satisfies
\begin{align*}
\epsilon
\leq \frac{2(1-\overline{\alpha})(2-\frac 1 2 \overline{\alpha})^{1-\overline{\alpha}}\Delta t^{\overline{\alpha}}}{(6-\frac7 2\underline{\alpha})(1-\frac 1 2 \underline{\alpha})}
\leq \frac{2(1-\alpha_{k+\sigma_k})\Delta t^{\alpha_{k+\sigma_k}}}{(6-\frac7 2\alpha_{k+\sigma_k})(1-\frac 1 2 \alpha_{k+\sigma_k})(2-\frac 1 2 \alpha_{k+\sigma_k})^{\alpha_{k+\sigma_k}-1}}.
\end{align*}
The proof is complete.
\end{proof}

\section{Difference schemes for the sub-diffusion problem}\label{finite-difference-scheme}
In this section, based on $FL2$-$1_{\sigma}$ formula in Section \ref{fast-approximation}, we  construct a fast temporal second-order and spatial fourth-order finite difference method ($FL2$-$1_{\sigma}$ scheme) for the VO time-fractional sub-diffusion equations. The unconditional stability and convergence of the difference method are investigated.

 We consider the multi-dimensional VO time-fractional sub-diffusion equations
\begin{align}
&\prescript{C}{0}{\mathcal{D}}^{\alpha(t)}_tu(\mathbf{x},t)=\Delta u(\mathbf{x},t)+f(\mathbf{x},t),\ \ \mathbf{x}\in \Omega,\ \ t\in (0,T], \label{E1}\\
&u(\mathbf{x},0)=\varphi(\mathbf{x}),\ \ \mathbf{x}\in \overline{\Omega},\label{E2}\\
&u(\mathbf{x},t)=0,\ \ \mathbf{x}\in \partial\Omega,\ \ t\in [0,T],\label{E3}
\end{align}
where $\Omega=\prod_{k=1}^d(a_l^{(k)},a_r^{(k)})\subset \mathbb{R}^d$, $\partial\Omega$ is the boundary of $\Omega$, $\overline{\Omega}=\Omega\cup\partial\Omega$, $\mathbf{x}=(x^{(1)},x^{(2)},\ldots,x^{(d)})\in\Omega$, $\Delta u(\mathbf{x},t)=\sum_{k=1}^d\partial^2_{x^{(k)}}u(\mathbf{x},t)$, $f(\mathbf{x},t)$ and $\varphi(\mathbf{x})$ represent the given sufficiently smooth functions.

Denote $L^{(k)}=a_r^{(k)}-a_l^{(k)}$, $k=1,2,\ldots,d$. Let $m^{(k)}$ be positive integers. Define a uniform partition of $\Omega$ by $x^{(k)}_{j^{(k)}}=a_l^{(k)}+j^{(k)}\Delta x^{(k)}$ $(j^{(k)}=0,1,\ldots,m^{(k)}; k=1,2,\ldots,d)$ for $\Delta x^{(k)}=L^{(k)}/m^{(k)}$
and denote $\Delta x=\max\limits_{1\leq k \leq d}\Delta x^{(k)}$.
Let $\Omega_h=\{(x^{(1)}_{j^{(1)}},x^{(2)}_{j^{(2)}},\ldots,x^{(d)}_{j^{(d)}})|j^{(k)}=1,2,\ldots,m^{(k)}-1; k=1,2,\ldots,d\}$,  $\overline{\Omega}_h=\{(x^{(1)}_{j^{(1)}},x^{(2)}_{j^{(2)}},\ldots,x^{(d)}_{j^{(d)}})|j^{(k)}=0,1,\ldots,m^{(k)}; k=1,2,\ldots,d\}$,
and $\partial\Omega_h=\overline{\Omega}_h\setminus\Omega_h$.
Denote the index vector $j=(j^{(1)},j^{(2)},\ldots,j^{(d)})$ and the spatial point $\mathbf{x}_j=\big(x^{(1)}_{j^{(1)}},x^{(2)}_{j^{(2)}},\ldots,x^{(d)}_{j^{(d)}}\big)$, then we define the index space
\begin{align*}
\overline{\mathcal{J}}=\{j|\mathbf{x}_j\in\overline{\Omega}_h\},\ \ \ \ \mathcal{J}=\{j|\mathbf{x}_j\in\Omega_h\},\ \ \ \ \partial \mathcal{J}=\{j|\mathbf{x}_j\in\partial\Omega_h\}.
\end{align*}
Thus the grid function spaces are defined by
\begin{align*}
&\mathcal {U}=\{u|u\ \textrm{being a grid function on}\ \overline{\Omega}_h\},\\
&\mathring{\mathcal {U}}=\{u|u\in\mathcal {U}; u_j=0\ \textrm{when}\ j\in\partial \mathcal{J}\}.
\end{align*}
We introduce the following discrete operators in the grid space $\mathcal {U}$
\begin{align*}
\delta_ku_{j+\frac 1 2\delta_k}=\frac 1 {\Delta x^{(k)}}(u_{j+\delta_k}-u_j),\ \ \delta_k^2u_j=\frac 1 {(\Delta x^{(k)})^2}(u_{j+\delta_k}-2u_j+u_{j-\delta_k}),
\end{align*}
where $\delta_k=(0,\ldots,1,\ldots,0)$ is an index with 1 at the $k$-th position and 0 at other positions.
\begin{align}\label{cal-A}
&\mathcal{A}_ku_j=
\begin{cases}
\frac 1 {12}(u_{j-\delta_k}+10u_j+u_{j+\delta_k}), \ \ \ j\in \mathcal{J},\\
u_j, \ \ \ j\in\partial \mathcal{J},\\
\end{cases}\\
&\Delta_hu_j=\sum_{k=1}^d\delta_k^2u_j,\ \ \mathcal{A}_hu_j=\prod_{k=1}^d\mathcal{A}_ku_j, \ \ \Lambda_hu_j=\sum_{k=1}^d\prod_{l=1,l\neq k}^d\mathcal{A}_l\delta_k^2u_j.
\end{align}
In the grid function space $\mathring{\mathcal {U}}$, define the discrete inner products and norms
\begin{align*}
&(u,w)=\left(\prod_{r=1}^d\Delta x^{(r)}\right)\sum_{j\in \mathcal{J}}u_jw_j,\ \ \ \ \| u\| =\sqrt{(u,u)},\\
&(u,w)_{\mathcal{A}_h}=(\mathcal{A}_hu,w),\ \ \ \ \ \ \ \ \ \ \ \ \ \ \| u\| _{\mathcal{A}_h}=\sqrt{(u,{\mathcal{A}_h}u)},\\
&(\delta_ku,\delta_kw)
=\left(\prod_{r=1}^d\Delta x^{(r)}\right)\sum_{j^{(k)}=0}^{m^{(k)}-1}\left(\prod_{l=1,l\neq k}^d\sum_{j^{(l)}=1}^{m^{(l)}-1}\right)
 (\delta_ku_{j+\frac 1 2\delta_k})(\delta_kw_{j+\frac 1 2\delta_k}),\\
&|u|_{1,k}=\sqrt{(\delta_ku,\delta_ku)},\ \ \ \ \ \ \ \ \ \ \ \ \ \ |u|_1=\sqrt{\sum_{k=1}^d|u|_{1,k}^2},\\
&(\delta_k^2u,\delta_k^2w)=\left(\prod_{r=1}^d\Delta x^{(r)}\right)\sum_{j\in \mathcal{J}}(\delta_k^2u_j)(\delta_k^2w_j),\ \ |u|_{2,k}=\sqrt{(\delta_k^2u,\delta_k^2u)},\ \ \|u\|_\infty=\max_{j\in \mathcal{J}}|u_j|.\\
\end{align*}
\subsection{$FL2$-$1_{\sigma}$ scheme}
Before deriving the finite difference schemes for the problem \eqref{E1}--\eqref{E3}, we denote the numerical solution at time $t_k$ with spatial point $\mathbf{x}_j$ by $u_j^k$, and $f(\mathbf{x}_j,t_{k+\sigma_k})$ by $f_j^{k+\sigma_k}$ for $\mathbf{x}_j\in \overline{\Omega}_h$ and $0\leq k \leq n$. We assume that the solution $u\in \mathcal{C}^{(6,3)}\big(\overline{\Omega}\times [0,T]\big)$.

Next we recall $L2$-$1_{\sigma}$ scheme for the problem \eqref{E1}--\eqref{E3}. Considering the equation \eqref{E1} at $(\mathbf{x}_j,t_{k+\sigma_k})$, we have
\begin{align}\label{DE1}
&\prescript{C}{0}{\mathcal{D}}^{\alpha(t)}_tu(\mathbf{x}_j,t)|_{t=t_{k+\sigma_k}}=\Delta u(\mathbf{x}_j,t_{k+\sigma_k})+f_j^{k+\sigma_k},\ \ j\in \mathcal{J},\ \ 0\leq k \leq n-1.
\end{align}
By Lemma \ref{L-error-FL21}, the term on the left-hand side of \eqref{DE1} satisfies
\begin{align}\label{tru-error1}
&\prescript{C}{0}{\mathcal{D}}^{\alpha(t)}_tu(\mathbf{x}_j,t)|_{t=t_{k+\sigma_k}}
=\prescript{FH}{0}{\mathcal{D}}^{\alpha_{k+\sigma_k}}_tu(\mathbf{x}_j,t_{k+\sigma_k})
 +\mO(\Delta t^{3-\alpha_{k+\sigma_k}}+\epsilon),\ \ j\in \mathcal{J},\ \ 0\leq k \leq n-1.
\end{align}
On the other hand, using Lemmas 3.4 and 3.5 in \cite{Du-2020} , the first term on the right-hand side of \eqref{DE1} satisfies
\begin{align}\label{tru-error2}
\Delta u(\mathbf{x}_j,t_{k+\sigma_k})=&\sigma_k\Delta u(\mathbf{x}_j,t_{k+1})+(1-\sigma_k)\Delta u(\mathbf{x}_j,t_k)+\mO(\Delta t^2).
\end{align}
Then substituting \eqref{tru-error1} and \eqref{tru-error2} into \eqref{DE1} gives
\begin{align*}
\prescript{FH}{0}{\mathcal{D}}^{\alpha_{k+\sigma_k}}_tu(\mathbf{x}_j,t_{k+\sigma_k})
=\sigma_k \Delta u(\mathbf{x}_j,t_{k+1})
 +(1-\sigma_k) \Delta u(\mathbf{x}_j,t_k)
 + f_j^{k+\sigma_k}+\mO(\Delta t^2+\epsilon),\\
\ \ \ \ \ \ \ \ \ \ \ \ \ \ \ \ \ \ \ \ \ \ \ j\in \overline{\mathcal{J}},\ \ 0\leq k \leq n-1.
\end{align*}
Acting the averaging operator $\mathcal{A}_h$ in \eqref{cal-A} on both hand sides of the equality above, we obtain
\begin{align*}
&\mathcal{A}_h\prescript{FH}{0}{\mathcal{D}}^{\alpha_{k+\sigma_k}}_tu(\mathbf{x}_j,t_{k+\sigma_k})\\
=&\sigma_k\mathcal{A}_h\Delta u(\mathbf{x}_j,t_{k+1})
 +(1-\sigma_k)\mathcal{A}_h\Delta u(\mathbf{x}_j,t_k)
 +\mathcal{A}_hf_j^{k+\sigma_k}+\mO(\Delta t^2+\epsilon),\ \ j\in \mathcal{J},\ \ 0\leq k \leq n-1,
\end{align*}
where
\begin{align*}
\prescript{FH}{0}{\mathcal{D}}^{\alpha_{k+\sigma_k}}_tu(\mathbf{x}_j,t_{k+\sigma_k})
=\frac{T^{-\alpha_{k+\sigma_k}}}{\Gamma(1-\alpha_{k+\sigma_k})}\sum_{i=\underline{N}+1}^{\overline{N}}\theta_{i}^{(k)}H_i^{(k)}
 + s^{(k)}\sigma_k^{1-\alpha_{k+\sigma_k}}\big(u(\mathbf{x}_j,t_{k+1})-u(\mathbf{x}_j,t_k)\big),
\end{align*}
with
\begin{align*}
H_i^{(k)}=&e^{-\lambda_i(1+\sigma_k-\sigma_{k-1})\Delta t/T} H_i^{(k-1)}
           +A_i^{(k)}\big(u(\mathbf{x}_j,t_k)-u(\mathbf{x}_j,t_{k-1})\big)+B_i^{(k)}\big(u(\mathbf{x}_j,t_{k+1})-u(\mathbf{x}_j,t_k)\big).
\end{align*}
By the fact of \cite{Du-2020}
\begin{align*}
\mathcal{A}_h\Delta u(\mathbf{x}_j,t_k)=\Lambda_hu(\mathbf{x}_j,t_k)+\mO(\Delta x^4),\ \ j\in \mathcal{J},\ \ 0\leq k \leq n,
\end{align*}
thus
\begin{align}\label{CADIFL21}
\mathcal{A}_h\prescript{FH}{0}{\mathcal{D}}^{\alpha_{k+\sigma_k}}_tu(\mathbf{x}_j,t_{k+\sigma_k})
=\Lambda_h\big(\sigma_k u(\mathbf{x}_j,t_{k+1})
 +(1-\sigma_k) u(\mathbf{x}_j,t_k)\big)
 +\mathcal{A}_hf_j^{k+\sigma_k}+S_j^k,\nonumber\\
 ~~~~~~~~~~~~~~~~~~~~~~~~~~~~~~~~~~~~~\ \ j\in \mathcal{J},\ \ 0\leq k \leq n-1,
\end{align}
where there exists a constant $c_0$ such that
\begin{align}\label{local truncation}
|S_j^k|\leq c_0 (\Delta t^2+\Delta x^4+\epsilon),\ \ \ \ j\in \mathcal{J},\ \ 0\leq k \leq n-1.
\end{align}
From the initial and boundary value conditions \eqref{E2}--\eqref{E3}, we have
\begin{align}
&u(\mathbf{x}_j,0)=\varphi(\mathbf{x}_j),\ \ j\in \mathcal{J},\label{IBV1}\\
&u(\mathbf{x}_j,t_k)=0,\ \ j\in \partial\mathcal{J},\ \ 0\leq k \leq n.\label{IBV2}
\end{align}
Omitting the small term $S_j^k$ in \eqref{CADIFL21}, we construct $FL2$-$1_{\sigma}$ scheme for the problem \eqref{E1}--\eqref{E3} as follows
\begin{align}
&\mathcal{A}_h\prescript{FH}{0}{\mathcal{D}}^{\alpha_{k+\sigma_k}}_tu_j^{k+\sigma_k}
=\Lambda_h\left(\sigma_k u_j^{k+1}+(1-\sigma_k) u_j^k\right)+\mathcal{A}_hf_j^{k+\sigma_k},\ \ j\in \mathcal{J},\ \ 0\leq k \leq n-1,\label{FL211}\\
&u_j^0=\varphi(\mathbf{x}_j),\ \ j\in \mathcal{J},\label{FL212}\\
&u_j^k=0,\ \ j\in \partial\mathcal{J},\ \ 0\leq k \leq n,\label{FL213}
\end{align}
where
\begin{align*}
\prescript{FH}{0}{\mathcal{D}}^{\alpha_{k+\sigma_k}}_tu_j^{k+\sigma_k}
=\frac{T^{-\alpha_{k+\sigma_k}}}{\Gamma(1-\alpha_{k+\sigma_k})}\sum_{i=\underline{N}+1}^{\overline{N}}\theta_{i}^{(k)}H_i^{(k)}
 + s^{(k)}\sigma_k^{1-\alpha_{k+\sigma_k}}\big(u_j^{k+1}-u_j^k\big),
\end{align*}
with
\begin{align*}
H_i^{(k)}=&e^{-\lambda_i(1+\sigma_k-\sigma_{k-1})\Delta t/T} H_i^{(k-1)}+A_i^{(k)}\big(u_j^k-u_j^{k-1}\big)+B_i^{(k)}\big(u_j^{k+1}-u_j^k\big).
\end{align*}
Recall $L2$-$1_{\sigma}$ scheme for the problem \eqref{E1}--\eqref{E3} as follows \cite{Du-2020}
\begin{align}
&\mathcal{A}_h\prescript{H}{0}{\mathcal{D}}^{\alpha_{k+\sigma_k}}_tu_j^{k+\sigma_k}
=\Lambda_h\left(\sigma_k u_j^{k+1}+(1-\sigma_k) u_j^k\right)+\mathcal{A}_hf_j^{k+\sigma_k},\ \ j\in \mathcal{J},\ \ 0\leq k \leq n-1,\label{L211}\\
&u_j^0=\varphi(\mathbf{x}_j),\ \ j\in \mathcal{J},\label{L212}\\
&u_j^k=0,\ \ j\in \partial\mathcal{J},\ \ 0\leq k \leq n,\label{L213}
\end{align}
where
\begin{align*}
\prescript{H}{0}{\mathcal{D}}^{\alpha_{k+\sigma_k}}_tu_j^{k+\sigma_k}
=&s^{(k)}\sum_{l=0}^k g_l^{(k)}\big(u_j^{k-l+1}-u_j^{k-l}\big).
\end{align*}
\subsection{Stability and convergence of  $FL2$-$1_{\sigma}$ scheme}

As described in Lemma \ref{L-coefficient}, the coefficients $\rho_l^{(k)}$ hold the vital properties for the stability and convergence analysis. Thus, similar to the proof given in \cite{Du-2020}, we present the following lemmas which will be used in the analysis of $FL2$-$1_{\sigma}$ scheme \eqref{FL211}--\eqref{FL213}.

\begin{lemma}\cite{Alikhanov-2015}\label{L-3.3}
Let $\mathring{\mathcal {U}}$ be an inner product space and $\langle\cdot,\cdot\rangle_*$ is the inner product with the induced norm $\|\cdot\|_*$. Suppose $\big\{c_l^{(k)}|0\leq l\leq k, k\geq1\big\}$ satisfies $0<c_k^{(k)}<c_{k-1}^{(k)}<\ldots<c_0^{(k)}$. For $v^0, v^1, \ldots, v^{k+1}\in \mathring{\mathcal {U}}$, we have the following inequality
\begin{align*}
\sum_{l=0}^kc_l^{(k)}\langle v^{k-l+1}-v^{k-l},\sigma_kv^{k+1}+(1-\sigma_k)v^k \rangle_*
\geq \frac 1 2 \sum_{l=0}^kc_l^{(k)}\Big(\|v^{k-l+1}\|_*^2-\|v^{k-l}\|_*^2\Big).
\end{align*}
\end{lemma}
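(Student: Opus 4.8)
The statement is the Alikhanov-type discrete energy inequality, and the plan is to reduce it, through the polarization identity, to the nonnegativity of a quadratic form in the solution increments and then to exploit the strict monotonicity of $c_l^{(k)}$ by a summation by parts. Writing $v^{k+\sigma_k}:=\sigma_k v^{k+1}+(1-\sigma_k)v^k$ and $d^{j}:=v^{j+1}-v^{j}$, and using $\|v^{k-l+1}\|_*^2-\|v^{k-l}\|_*^2=\langle d^{k-l},v^{k-l+1}+v^{k-l}\rangle_*$, the difference of the two sides is
\[
\mathcal{E}:=\sum_{l=0}^{k}c_l^{(k)}\Big\langle d^{k-l},\,v^{k+\sigma_k}-\tfrac12\big(v^{k-l+1}+v^{k-l}\big)\Big\rangle_*,
\]
so the whole lemma amounts to proving $\mathcal{E}\ge0$.

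First I would isolate the most recent increment by substituting $v^{k+\sigma_k}=v^k+\sigma_k d^{k}$. This splits $\mathcal{E}$ into an interior part, in which the right endpoint is the plain value $v^k$, and a remainder coupling every increment to $d^{k}$. For the interior part I would reindex $j=k-l$ so that the weights $b_j:=c_{k-j}^{(k)}$ become increasing, and use the telescoping identity
\[
\sum_{j=m}^{k-1}\Big\langle d^{j},\,v^{k}-\tfrac12\big(v^{j+1}+v^{j}\big)\Big\rangle_*=\tfrac12\big\|v^{k}-v^{m}\big\|_*^2\ge0,\qquad 0\le m\le k-1.
\]
Summation by parts with $b_j=\sum_{m=0}^{j}\beta_m$, where $\beta_0=c_k^{(k)}>0$ and $\beta_m=c_{k-m}^{(k)}-c_{k-m+1}^{(k)}>0$ (this is exactly where the strict monotonicity \eqref{coe1} enters), then rewrites the interior part as $\sum_{m}\beta_m\cdot\tfrac12\|v^k-v^m\|_*^2$, which is manifestly nonnegative.

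The main obstacle is the remainder
\[
\Big(\sigma_k-\tfrac12\Big)c_0^{(k)}\,\|d^{k}\|_*^2+\sigma_k\Big\langle\textstyle\sum_{j=0}^{k-1}c_{k-j}^{(k)}d^{j},\,d^{k}\Big\rangle_*,
\]
whose cross term is sign-indefinite; monotonicity alone does not control it, and for generic strictly decreasing coefficients the inequality can in fact fail. Here I would use $\sigma_k>\tfrac12$ to keep a positive weight on $\|d^{k}\|_*^2$ and complete the square, dominating the coupling of $d^{k}$ with the newest interior increment by the positive contributions extracted above. Carrying this out cleanly forces the corner inequality $(2\sigma_k-1)c_0^{(k)}-\sigma_k c_1^{(k)}\ge0$, which is precisely property \eqref{coe2} of the coefficients $\rho_l^{(k)}$ to which the lemma is applied; combined with $\sigma_k=1-\tfrac12\alpha_{k+\sigma_k}\in(\tfrac12,1)$ it renders the remainder nonnegative and closes the estimate. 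I expect the delicate point to be arranging this final completion of squares so that every residual term is visibly nonnegative, rather than any single computation.
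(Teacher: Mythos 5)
Your strategy is sound and does close, but note first that the paper contains no proof of this lemma to compare against: it is quoted from \cite{Alikhanov-2015}, and your argument is essentially a reconstruction of Alikhanov's original one. The most valuable part of your proposal is the diagnosis that the hypotheses as printed are insufficient, and that diagnosis is correct: with strict monotonicity alone the inequality is false. Already for $k=1$ the difference of the two sides equals the quadratic form
\begin{align*}
\Big(\sigma_1-\tfrac12\Big)c_0^{(1)}\|d^1\|_*^2+\sigma_1 c_1^{(1)}\langle d^0,d^1\rangle_*+\tfrac12\,c_1^{(1)}\|d^0\|_*^2,\qquad d^j=v^{j+1}-v^j,
\end{align*}
which is positive semidefinite precisely when $(2\sigma_1-1)c_0^{(1)}\ge\sigma_1^2c_1^{(1)}$; choosing $\sigma_1$ near $\tfrac12$ and $c_1^{(1)}/c_0^{(1)}$ near $1$ violates this, so some hypothesis beyond monotonicity is indispensable. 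Alikhanov's lemma indeed carries the extra assumption $(2\sigma_k-1)c_0^{(k)}-\sigma_k c_1^{(k)}\ge0$; the present paper omits it from the statement of Lemma \ref{L-3.3} but proves it for the coefficients $\rho_l^{(k)}$ as property \eqref{coe2} of Lemma \ref{L-coefficient} --- that property exists precisely to feed this lemma --- so the application in Theorem \ref{T-stability} is unaffected. One refinement to your last step: the cross term couples $d^k$ to \emph{all} earlier increments, not only the newest, so the completion of squares must be spread over every positive term extracted from the interior part: writing $\sum_{j=0}^{k-1}c_{k-j}^{(k)}d^j=\sum_{m=0}^{k-1}\beta_m\big(v^k-v^m\big)$ with your $\beta_m$, using $\sigma_k\langle v^k-v^m,d^k\rangle_*+\tfrac12\|v^k-v^m\|_*^2\ge-\tfrac{\sigma_k^2}{2}\|d^k\|_*^2$ for each $m$, and noting the telescoping $\sum_{m=0}^{k-1}\beta_m=c_1^{(k)}$, your difference $\mathcal{E}$ obeys
\begin{align*}
\mathcal{E}\ \ge\ \tfrac12\Big[(2\sigma_k-1)c_0^{(k)}-\sigma_k^2\,c_1^{(k)}\Big]\|d^k\|_*^2\ \ge\ \tfrac12\Big[(2\sigma_k-1)c_0^{(k)}-\sigma_k\,c_1^{(k)}\Big]\|d^k\|_*^2,
\end{align*}
the last inequality because $\sigma_k<1$. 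Hence the condition actually forced is the slightly weaker $(2\sigma_k-1)c_0^{(k)}\ge\sigma_k^2c_1^{(k)}$, and \eqref{coe2} suffices, exactly as you anticipated.
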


\begin{lemma}\cite{Du-2020}\label{L-3.6}
For any $u, w \in \mathring{\mathcal {U}}$, define
\begin{align*}
\langle u, w \rangle_{\mathcal{A}_h}=\big(\mathcal{A}_hu,-\Delta_h w\big).
\end{align*}
Then $\langle u, w \rangle_{\mathcal{A}_h}$ is an inner product on $\mathring{\mathcal {U}}$. We denote
\begin{align*}
|u|_{1,\mathcal{A}_h}=\sqrt{\langle u, u \rangle_{\mathcal{A}_h}}.
\end{align*}
\end{lemma}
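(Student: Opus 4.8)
The plan is to verify directly that $\langle\cdot,\cdot\rangle_{\mathcal{A}_h}$ satisfies the three axioms of an inner product on the finite-dimensional space $\mathring{\mathcal{U}}$: bilinearity, symmetry, and positive definiteness. Bilinearity is immediate, since $\mathcal{A}_h$ and $\Delta_h$ are linear operators and the base product $(\cdot,\cdot)$ is bilinear. The whole content therefore lies in symmetry and in strict positivity of $\langle u,u\rangle_{\mathcal{A}_h}=(\mathcal{A}_hu,-\Delta_hu)$, and both rest on the algebraic structure of the averaging and difference operators.

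First I would record the building blocks. On $\mathring{\mathcal{U}}$ one has the identity $\mathcal{A}_k=I+\tfrac{(\Delta x^{(k)})^2}{12}\delta_k^2=I-\tfrac{(\Delta x^{(k)})^2}{12}(-\delta_k^2)$, so each $\mathcal{A}_k$ is a function of the one-dimensional operator $-\delta_k^2$ acting in the $k$-th coordinate. Using summation by parts together with the homogeneous boundary condition $u_j=0$ for $j\in\partial\mathcal{J}$, every $-\delta_k^2$ is self-adjoint with respect to $(\cdot,\cdot)$, whence so is each $\mathcal{A}_k$; moreover operators associated with distinct coordinate directions commute, and $\mathcal{A}_k$ commutes with $\delta_k^2$ because both are built from the same shift in direction $k$. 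Consequently $\mathcal{A}_h=\prod_{k=1}^d\mathcal{A}_k$ is self-adjoint and commutes with $-\Delta_h=\sum_{k=1}^d(-\delta_k^2)$. Finally, the eigenvalues of $\mathcal{A}_k$ lie in $(2/3,1)$ (since those of $-\delta_k^2$ lie in $(0,4/(\Delta x^{(k)})^2)$), so $\mathcal{A}_h$ is positive definite, while $(-\Delta_hu,u)=\sum_{k=1}^d|u|_{1,k}^2=|u|_1^2>0$ for every $u\in\mathring{\mathcal{U}}\setminus\{0\}$.

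With these facts, symmetry follows by shuttling the operators across the bracket: self-adjointness of $\mathcal{A}_h$ and of $-\Delta_h$ combined with their commutativity gives
\begin{align*}
\langle u,w\rangle_{\mathcal{A}_h}=(\mathcal{A}_hu,-\Delta_hw)=(u,(-\Delta_h)\mathcal{A}_hw)=(-\Delta_hu,\mathcal{A}_hw)=(\mathcal{A}_hw,-\Delta_hu)=\langle w,u\rangle_{\mathcal{A}_h}.
\end{align*}
For positive definiteness I would exploit the commutativity once more. Since $\mathcal{A}_h$ is symmetric positive definite it admits a symmetric positive-definite square root $\mathcal{A}_h^{1/2}$ mapping $\mathring{\mathcal{U}}$ into itself, and $\mathcal{A}_h^{1/2}$ commutes with $-\Delta_h$; setting $v=\mathcal{A}_h^{1/2}u$ yields
\begin{align*}
\langle u,u\rangle_{\mathcal{A}_h}=(\mathcal{A}_h^{1/2}u,(-\Delta_h)\mathcal{A}_h^{1/2}u)=(-\Delta_hv,v)=|v|_1^2\geq0,
\end{align*}
with equality precisely when $v=0$, equivalently $u=0$ by invertibility of $\mathcal{A}_h^{1/2}$. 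Equivalently, one may diagonalize $\mathcal{A}_h$ and $-\Delta_h$ simultaneously in their common tensor-product sine eigenbasis and read off positivity from the product of positive eigenvalues.

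The main obstacle is the positive-definiteness step: because $\mathcal{A}_h$ averages while $-\Delta_h$ differentiates, the form $(\mathcal{A}_hu,-\Delta_hu)$ is not positive term by term, and the argument genuinely needs the commutativity of $\mathcal{A}_h$ and $\Delta_h$ (hence simultaneous diagonalization or the square-root factorization) to decouple the two operators. The only other point demanding care is the summation-by-parts bookkeeping that establishes self-adjointness, where one must check that the boundary contributions vanish by virtue of $u\in\mathring{\mathcal{U}}$.
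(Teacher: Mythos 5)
Your proof is correct, but there is no in-paper proof to compare it against: the lemma is imported verbatim from \cite{Du-2020}, and the paper's own use of it leans on the companion Lemma \ref{L-3.8}, which records the quantitative equivalence $\big(\tfrac 2 3\big)^d|u|_1^2\leq|u|_{1,\mathcal{A}_h}^2\leq|u|_1^2$. The argument in the cited reference runs through exactly that estimate: starting from the identity $\mathcal{A}_k=I+\tfrac{(\Delta x^{(k)})^2}{12}\delta_k^2$ (which you also use), one expands $(\mathcal{A}_hu,-\Delta_hu)$ and controls the perturbation terms by the inverse estimate $\|\delta_k^2u\|^2\leq\tfrac 4{(\Delta x^{(k)})^2}|u|_{1,k}^2$, obtaining positive definiteness with the explicit constant $\big(\tfrac 2 3\big)^d$ and no spectral theory. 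Your route is genuinely different: self-adjointness by summation by parts, commutativity of the tensor-product operators, and then either simultaneous diagonalization in the sine basis or the commuting SPD square root $\mathcal{A}_h^{1/2}$ to reduce $\langle u,u\rangle_{\mathcal{A}_h}$ to $|\mathcal{A}_h^{1/2}u|_1^2$. All of your supporting facts check out (in particular the eigenvalues of $\mathcal{A}_k$ do lie in $(2/3,1)$, and commutativity really is needed even for symmetry), so the proof stands. The trade-off is this: your argument is shorter and isolates cleanly why the form is positive (positivity of commuting factors), whereas the reference's computation buys the explicit equivalence constants that the paper reuses in the stability and convergence theorems, where Lemma \ref{L-3.8} rather than mere positive definiteness is invoked. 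One point you should make explicit rather than leave implicit: $\delta_k^2$ and $\mathcal{A}_k$ must be read as maps of $\mathring{\mathcal{U}}$ into itself (extending by zero on $\partial\mathcal{J}$), since that identification is what legitimizes both the tensor-product commutativity and the vanishing of boundary terms in your summation by parts.
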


\begin{lemma}\cite{Du-2020}\label{L-3.8}
For any $u \in \mathring{\mathcal {U}}$, we have
\begin{align*}
\big(\frac 2 3\big)^d|u|_1^2\leq|u|_{1,\mathcal{A}_h}^2\leq|u|_1^2.
\end{align*}
\end{lemma}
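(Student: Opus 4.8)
The plan is to exploit the tensor-product structure of the averaging operator $\mathcal{A}_h=\prod_{k=1}^d\mathcal{A}_k$ and to diagonalize everything simultaneously. First I would record the algebraic identity $\mathcal{A}_k=I+\frac{(\Delta x^{(k)})^2}{12}\delta_k^2=I-\frac{(\Delta x^{(k)})^2}{12}\bigl(-\delta_k^2\bigr)$, which follows immediately from the stencil in \eqref{cal-A}. Each directional operator $-\delta_k^2$ is symmetric and positive definite on $\mathring{\mathcal{U}}$ under the homogeneous boundary condition, the discrete sine vectors in the $k$-th direction are its eigenvectors, and their tensor products $\Phi_{\mathbf p}$ form an orthogonal basis of $\mathring{\mathcal{U}}$ that simultaneously diagonalizes every $-\delta_l^2$, every $\mathcal{A}_l$, and therefore both $\mathcal{A}_h$ and $-\Delta_h$. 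The crucial structural fact enabling this is that the $\mathcal{A}_l$ and $\delta_k^2$ pairwise commute, being shifts in distinct coordinate directions (for $l\neq k$) or functions of the same $\delta_k^2$ (for $l=k$).

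The key step is the one-dimensional bound $\tfrac23\|u\|^2\le(\mathcal{A}_k u,u)\le\|u\|^2$ for every $u\in\mathring{\mathcal{U}}$ and each $k$. The upper bound is immediate from $(\mathcal{A}_k u,u)=\|u\|^2-\tfrac{(\Delta x^{(k)})^2}{12}(\delta_k u,\delta_k u)\le\|u\|^2$, since $(\delta_k u,\delta_k u)\ge0$. The lower bound reduces to the inverse (spectral-radius) estimate $(\delta_k u,\delta_k u)\le\frac{4}{(\Delta x^{(k)})^2}\|u\|^2$, which I would prove elementarily from $(u_{j+\delta_k}-u_j)^2\le 2u_{j+\delta_k}^2+2u_j^2$ followed by summation over the grid using $u=0$ on $\partial\mathcal{J}$; substituting this bound gives $(\mathcal{A}_k u,u)\ge\|u\|^2-\tfrac13\|u\|^2=\tfrac23\|u\|^2$. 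Equivalently, on the sine eigenbasis the eigenvalue of $\mathcal{A}_k$ equals $1-\tfrac13\sin^2\theta\in(2/3,1)$, which makes transparent why the extremes are never attained.

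Next I would assemble the multi-dimensional estimate. By discrete summation by parts, $|u|_1^2=\sum_{k=1}^d(\delta_k u,\delta_k u)=(u,-\Delta_h u)$. Expanding $u=\sum_{\mathbf p}c_{\mathbf p}\Phi_{\mathbf p}$ and writing the joint eigenvalues $-\delta_k^2\Phi_{\mathbf p}=\mu_{p_k}^{(k)}\Phi_{\mathbf p}$ and $\mathcal{A}_l\Phi_{\mathbf p}=a_{p_l}^{(l)}\Phi_{\mathbf p}$ with $a_{p_l}^{(l)}\in(2/3,1)$, I obtain $|u|_1^2=\sum_{\mathbf p}|c_{\mathbf p}|^2\|\Phi_{\mathbf p}\|^2\bigl(\sum_k\mu_{p_k}^{(k)}\bigr)$ and, because $\mathcal{A}_h$ is symmetric and commutes with $-\Delta_h$, $|u|_{1,\mathcal{A}_h}^2=(\mathcal{A}_h u,-\Delta_h u)=\sum_{\mathbf p}|c_{\mathbf p}|^2\|\Phi_{\mathbf p}\|^2\bigl(\prod_{l=1}^d a_{p_l}^{(l)}\bigr)\bigl(\sum_k\mu_{p_k}^{(k)}\bigr)$. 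Since $\prod_{l=1}^d a_{p_l}^{(l)}\in\bigl((2/3)^d,1\bigr)$, comparing the two expansions term by term yields $\bigl(\tfrac23\bigr)^d|u|_1^2\le|u|_{1,\mathcal{A}_h}^2\le|u|_1^2$.

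The main obstacle is the lower bound: one must control how much the averaging can shrink the discrete energy, and this rests entirely on the sharp constant $4/(\Delta x^{(k)})^2$ for the spectral radius of $-\delta_k^2$ on $\mathring{\mathcal{U}}$. Once that single one-dimensional inequality and the commutativity of the directional operators are secured, the passage to $d$ dimensions is a routine simultaneous-diagonalization argument; the strict membership $\prod_l a_{p_l}^{(l)}\in\bigl((2/3)^d,1\bigr)$ furthermore shows the stated non-strict bounds are never attained for $u\neq0$, which is consistent with the statement.
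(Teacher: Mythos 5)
Your proposal is correct, but note that the paper itself offers no proof of this lemma at all: it is imported verbatim from the reference [Du-2020] (Du, Alikhanov, and Sun), so there is no in-paper argument to compare against. Your proof is self-contained and sound: the identity $\mathcal{A}_k=I+\frac{(\Delta x^{(k)})^2}{12}\delta_k^2$ is exactly what the stencil in \eqref{cal-A} gives at interior points; the inverse estimate $(\delta_k u,\delta_k u)\le\frac{4}{(\Delta x^{(k)})^2}\|u\|^2$ follows from $(a-b)^2\le 2a^2+2b^2$ and the homogeneous boundary values, yielding the one-dimensional bound $\frac23\|u\|^2\le(\mathcal{A}_ku,u)\le\|u\|^2$; and the tensor sine functions do simultaneously diagonalize all of $-\delta_k^2$, $\mathcal{A}_l$, hence $\mathcal{A}_h$ and $-\Delta_h$ on $\mathring{\mathcal{U}}$, with $\mathcal{A}_k$-eigenvalues $1-\frac13\sin^2\theta\in(2/3,1)$, so the term-by-term comparison of $(\mathcal{A}_hu,-\Delta_hu)$ against $(u,-\Delta_hu)$ delivers both inequalities. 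The difference from the standard treatment in the cited literature is one of assembly: there the same two ingredients (the operator identity and the one-dimensional bound on the quadratic form) are combined direction by direction through summation by parts and the symmetric positive-definiteness of each factor $\mathcal{A}_l$, without writing out a joint eigenexpansion; that route generalizes more readily to situations where an explicit common eigenbasis is unavailable (e.g., nonuniform grids or variable coefficients), whereas your spectral argument buys transparency — the constants $(2/3)^d$ and $1$ appear as exact infima/suprema of products of eigenvalues, and you get for free that the bounds are strict for $u\neq0$. One point worth making explicit in a polished write-up is the boundary bookkeeping behind the claimed commutativity: the compositions such as $\mathcal{A}_l\delta_k^2$ ($l\neq k$) reference $\delta_k^2u$ at points lying on faces of $\partial\mathcal{J}$ transverse to direction $k$, and one should check (as the sine basis indeed guarantees, since every factor of $\Phi_{\mathbf p}$ vanishes there) that the eigenvector relations persist at those points; this is routine but it is where the zero boundary condition is genuinely used.
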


\begin{lemma}\cite{Du-2020}\label{L-3.9}
For any $u \in \mathring{\mathcal {U}}$, we have
\begin{align*}
\big(\frac 2 3\big)^{d-1}\|\Delta_h u\|^2\leq\big(\Lambda_h u,\Delta_h u\big)\leq\|\Delta_h u\|^2.
\end{align*}
\end{lemma}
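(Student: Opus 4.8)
The plan is to simultaneously diagonalize every operator involved and thereby reduce the two-sided estimate to a pointwise comparison of eigenvalues. First I would record the algebraic identity $\mathcal{A}_k = I + \frac{(\Delta x^{(k)})^2}{12}\delta_k^2$, valid at interior nodes, which exhibits $\mathcal{A}_k$ as a polynomial in $\delta_k^2$. Since operators acting in distinct coordinate directions commute and each $-\delta_k^2$ is symmetric positive semidefinite on $\mathring{\mathcal{U}}$ with respect to $(\cdot,\cdot)$ (using the zero boundary values), the families $\{\mathcal{A}_k\}$ and $\{\delta_l^2\}$ are pairwise commuting and symmetric. Consequently $\Delta_h=\sum_k\delta_k^2$ and $\Lambda_h=\sum_k\big(\prod_{l\neq k}\mathcal{A}_l\big)\delta_k^2$ are symmetric and share a common eigenbasis, so that $(\Lambda_h u,\Delta_h u)$ is a bona fide symmetric bilinear form that I may diagonalize.

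Next I would exhibit this eigenbasis explicitly as the tensor-product discrete sine functions $V^{(s)}_j=\prod_{k=1}^d\sin\big(s^{(k)}\pi j^{(k)}/m^{(k)}\big)$ with $1\le s^{(k)}\le m^{(k)}-1$, which vanish on $\partial\mathcal{J}$ (hence lie in $\mathring{\mathcal{U}}$) and are mutually orthogonal in $(\cdot,\cdot)$. For these, $\delta_k^2 V^{(s)}=\mu_k V^{(s)}$ with $\mu_k=-\frac{4}{(\Delta x^{(k)})^2}\sin^2\big(s^{(k)}\pi/(2m^{(k)})\big)\le 0$, while $\mathcal{A}_l V^{(s)}=a_l V^{(s)}$ with $a_l=1-\frac13\sin^2\big(s^{(l)}\pi/(2m^{(l)})\big)$. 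The key quantitative input is the range $a_l\in(\tfrac23,1)$, which forces the weight attached to the $k$-th direction in $\Lambda_h$ to satisfy $(\tfrac23)^{d-1}<\prod_{l\neq k}a_l<1$. Expanding $u=\sum_s c_s V^{(s)}$ and invoking orthogonality, both $\|\Delta_h u\|^2$ and $(\Lambda_h u,\Delta_h u)$ decouple into sums over modes in which the $s$-th mode contributes $|c_s|^2\|V^{(s)}\|^2\,\lambda_\Delta^2$ and $|c_s|^2\|V^{(s)}\|^2\,\lambda_\Lambda\lambda_\Delta$, respectively, where $\lambda_\Delta=\sum_k\mu_k$ and $\lambda_\Lambda=\sum_k\big(\prod_{l\neq k}a_l\big)\mu_k$.

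It then remains to establish the per-mode estimate $(\tfrac23)^{d-1}\lambda_\Delta^2\le\lambda_\Lambda\lambda_\Delta\le\lambda_\Delta^2$. Because each $\mu_k\le0$ and $(\tfrac23)^{d-1}\le\prod_{l\neq k}a_l\le1$, multiplying the weight bounds by the nonpositive $\mu_k$ and summing over $k$ gives the chain $\lambda_\Delta\le\lambda_\Lambda\le(\tfrac23)^{d-1}\lambda_\Delta$. Multiplying this chain through by $\lambda_\Delta\le0$ reverses the inequalities and yields precisely the desired two-sided bound; summing over $s$ against the nonnegative weights $|c_s|^2\|V^{(s)}\|^2$ then completes the proof. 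I expect the only delicate point to be the sign bookkeeping in this final step: since $(\Lambda_h u,\Delta_h u)$ pairs two genuinely different operators, each mode contributes a \emph{product} $\lambda_\Lambda\lambda_\Delta$ of two eigenvalues, and obtaining the correct inequality direction hinges on the fact that $\lambda_\Delta$ (and therefore $\lambda_\Lambda$) is nonpositive, so that multiplication by $\lambda_\Delta$ flips the weight inequalities into the stated estimate. Everything else — the identity for $\mathcal{A}_k$, simultaneous diagonalizability, and the eigenvalue range $(\tfrac23,1)$ for $\mathcal{A}_k$ — is routine.
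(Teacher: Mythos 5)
Your proof is correct. Note that the paper itself does not prove this lemma at all --- it is quoted verbatim from the reference [Du--Alikhanov--Sun, 2020] --- so there is no in-paper argument to compare against; your write-up is a self-contained substitute for the citation. Your route, explicitly diagonalizing everything in the tensor-product discrete sine basis, is a concrete version of the standard argument in that reference, which works operator-theoretically: one writes $\mathcal{A}_k = I + \frac{(\Delta x^{(k)})^2}{12}\delta_k^2$ (the identity you start from), observes that the commuting symmetric operators $-\delta_k^2$ have spectra in $\big(0, 4/(\Delta x^{(k)})^2\big)$, hence each $\mathcal{A}_k$ has spectrum in $\big(\tfrac23,1\big)$ and $\prod_{l\neq k}\mathcal{A}_l$ in $\big((\tfrac23)^{d-1},1\big)$, and then concludes by simultaneous diagonalization without ever naming the eigenvectors. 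What your explicit version buys is transparency (the eigenvalues $\mu_k$ and $a_l$ are written down, and the sign bookkeeping --- the step you rightly flag as the only delicate one --- is visible and checks out: multiplying the weight bounds by $\mu_k\le 0$ and then the resulting chain by $\lambda_\Delta\le 0$ does flip the inequalities twice, yielding $(\tfrac23)^{d-1}\lambda_\Delta^2\le\lambda_\Lambda\lambda_\Delta\le\lambda_\Delta^2$); what the abstract version buys is independence from the uniform-grid structure that makes the sine functions exact eigenvectors. The only small points you should state rather than assert in passing are that the $\prod_k(m^{(k)}-1)$ sine modes are mutually orthogonal for the discrete inner product and therefore form a basis of $\mathring{\mathcal{U}}$; both are standard DST facts, so this is a completeness-of-exposition remark, not a gap.
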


Now we obtain the following theorem to state the unconditional stability of the proposed scheme.
\begin{theorem}\label{T-stability}
Let $\big\{u_j^k|j\in \overline{\mathcal{J}}, 0\leq k\leq n\big\}$ be the solution of the difference scheme \eqref{FL211}--\eqref{FL213}. Then, we have
\begin{align}\label{stability}
\big|u^k\big|_{1,\mathcal{A}_h}^2
\leq \big|u^0\big|_{1,\mathcal{A}_h}^2+\big(\frac 3 2\big)^{d-1}\cdot\frac 1 {1-\epsilon} c_1
     \max_{0\leq l\leq k-1}\|\mathcal{A}_h f^{l+\sigma_l}\|^2,\ \ 0\leq k\leq n,
\end{align}
where
\begin{align*}
\|\mathcal{A}_h f^{l+\sigma_l}\|^2
=\left(\prod_{r=1}^d\Delta x^{(r)}\right)\sum_{j\in \mathcal{J}}\left(\mathcal{A}_h f_j^{l+\sigma_l}\right)^2,\ \
 c_1=\max_{0\leq t\leq T}\Big\{t^{\alpha(t)}\Gamma\big(1-\alpha(t)\big)\Big\}.
\end{align*}
\end{theorem}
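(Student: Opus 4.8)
The plan is to run the standard discrete energy argument of Alikhanov type: take the inner product of the scheme against $-\Delta_h$ applied to the combination $\sigma_k u^{k+1}+(1-\sigma_k)u^k$, and then convert the resulting weighted first-difference inequality into the stated bound by a Gronwall-type induction. First I would fix $0\le k\le n-1$, take the $(\cdot,\cdot)$ inner product of \eqref{FL211} with $-\Delta_h\big(\sigma_k u^{k+1}+(1-\sigma_k)u^k\big)$, and use the representation \eqref{FL21-2} of $\prescript{FH}{0}{\mathcal{D}}^{\alpha_{k+\sigma_k}}_tu^{k+\sigma_k}$ together with the definition $\langle u,w\rangle_{\mathcal{A}_h}=(\mathcal{A}_hu,-\Delta_h w)$ from Lemma \ref{L-3.6}. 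The left-hand side then becomes $s^{(k)}\sum_{l=0}^k\rho_l^{(k)}\langle u^{k-l+1}-u^{k-l},\,\sigma_k u^{k+1}+(1-\sigma_k)u^k\rangle_{\mathcal{A}_h}$, to which I would apply Lemma \ref{L-3.3} with $c_l^{(k)}=\rho_l^{(k)}$; the hypotheses there are exactly the monotonicity \eqref{coe1} (and the auxiliary sign condition \eqref{coe2}) supplied by Lemma \ref{L-coefficient}. This bounds the left-hand side below by $\tfrac12 s^{(k)}\sum_{l=0}^k\rho_l^{(k)}\big(|u^{k-l+1}|_{1,\mathcal{A}_h}^2-|u^{k-l}|_{1,\mathcal{A}_h}^2\big)$.

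For the right-hand side I would abbreviate $u^{k+\sigma_k}=\sigma_k u^{k+1}+(1-\sigma_k)u^k$ and treat the two terms separately. The diffusion term contributes $-(\Lambda_h u^{k+\sigma_k},\Delta_h u^{k+\sigma_k})$, which by Lemma \ref{L-3.9} is at most $-(2/3)^{d-1}\|\Delta_h u^{k+\sigma_k}\|^2$. For the source term I would use Cauchy--Schwarz and Young's inequality $(\mathcal{A}_h f^{k+\sigma_k},-\Delta_h u^{k+\sigma_k})\le \tfrac{1}{2\eta}\|\mathcal{A}_h f^{k+\sigma_k}\|^2+\tfrac{\eta}{2}\|\Delta_h u^{k+\sigma_k}\|^2$, choosing $\eta=2(2/3)^{d-1}$ so the $\|\Delta_h u^{k+\sigma_k}\|^2$ contribution is exactly absorbed by the diffusion term. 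Writing $E^k=|u^k|_{1,\mathcal{A}_h}^2$ and $Q^k=\tfrac{(3/2)^{d-1}}{2s^{(k)}}\|\mathcal{A}_h f^{k+\sigma_k}\|^2$, this yields the master inequality $\sum_{l=0}^k\rho_l^{(k)}\big(E^{k-l+1}-E^{k-l}\big)\le Q^k$ for every $k$.

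Next I would convert this weighted inequality into a pointwise bound by induction on $k$, aiming at $E^{k+1}\le E^0+\max_{0\le j\le k}Q^j/\rho_j^{(j)}$. The key manipulation is Abel summation, rewriting $\sum_{l=0}^k\rho_l^{(k)}(E^{k-l+1}-E^{k-l})$ as $\rho_0^{(k)}E^{k+1}-\rho_k^{(k)}E^0-\sum_{m=1}^k\big(\rho_{k-m}^{(k)}-\rho_{k-m+1}^{(k)}\big)E^m$, where every increment $\rho_{k-m}^{(k)}-\rho_{k-m+1}^{(k)}>0$ by the monotonicity \eqref{coe1}. Feeding the induction hypothesis $E^m\le E^0+M$ (with $M=\max_{0\le j\le k}Q^j/\rho_j^{(j)}$) into that positive-coefficient sum and telescoping $\sum_{m=1}^k(\rho_{k-m}^{(k)}-\rho_{k-m+1}^{(k)})=\rho_0^{(k)}-\rho_k^{(k)}$, I would reach $\rho_0^{(k)}E^{k+1}\le \rho_0^{(k)}E^0+Q^k+(\rho_0^{(k)}-\rho_k^{(k)})M$; dividing by $\rho_0^{(k)}$ and using $Q^k/\rho_k^{(k)}\le M$ closes the induction. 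The case $k=0$ is immediate from \eqref{FL21-2} and $\rho_0^{(0)}=g_0^{(0)}>0$.

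Finally I would make the constant explicit. Using the lower bound $\rho_k^{(k)}>\tfrac{(1-\epsilon)(1-\alpha_{k+\sigma_k})}{2(k+\sigma_k)^{\alpha_{k+\sigma_k}}}$ from Lemma \ref{L-coefficient}, together with $s^{(k)}=\Delta t^{-\alpha_{k+\sigma_k}}/\Gamma(2-\alpha_{k+\sigma_k})$, the identity $\Gamma(2-\alpha)=(1-\alpha)\Gamma(1-\alpha)$, and $\Delta t^{\alpha_{k+\sigma_k}}(k+\sigma_k)^{\alpha_{k+\sigma_k}}=t_{k+\sigma_k}^{\alpha_{k+\sigma_k}}$, I get $1/(s^{(k)}\rho_k^{(k)})<2t_{k+\sigma_k}^{\alpha_{k+\sigma_k}}\Gamma(1-\alpha_{k+\sigma_k})/(1-\epsilon)\le 2c_1/(1-\epsilon)$, whence $Q^k/\rho_k^{(k)}<(3/2)^{d-1}\tfrac{c_1}{1-\epsilon}\|\mathcal{A}_h f^{k+\sigma_k}\|^2$. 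Substituting into the induction bound and relabeling $k+1\mapsto k$ gives exactly \eqref{stability}. The hard part will be the Gronwall induction together with the bookkeeping of constants: one must tune Young's parameter so the diffusion coercivity absorbs $\|\Delta_h u^{k+\sigma_k}\|^2$ with precisely the right leftover, and then push the factor $1/(s^{(k)}\rho_k^{(k)})$ through the $\rho_k^{(k)}$ lower bound so that the $\epsilon$-dependence collapses into the clean $\tfrac{1}{1-\epsilon}c_1$ prefactor.
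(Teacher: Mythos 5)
Your proposal is correct and follows essentially the same route as the paper's own proof: the same inner product with $-\Delta_h\big(\sigma_k u^{k+1}+(1-\sigma_k)u^k\big)$, the same use of Lemmas \ref{L-3.3}, \ref{L-3.6}, \ref{L-3.9} with the coefficient properties of Lemma \ref{L-coefficient}, the same Young-inequality absorption yielding the master inequality with the $\big(\frac 3 2\big)^{d-1}\frac 1{2s^{(k)}}$ factor, and the same Abel-summation-plus-induction argument with the bound $\frac 1{2s^{(k)}\rho_k^{(k)}}<\frac{c_1}{1-\epsilon}$. The only (immaterial) difference is bookkeeping: you carry $M=\max_j Q^j/\rho_j^{(j)}$ abstractly through the induction and insert the $c_1/(1-\epsilon)$ bound at the end, whereas the paper substitutes it before inducting.
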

\begin{proof}
Denote $u^{k+\sigma_k}=\sigma_ku^{k+1}+(1-\sigma_k)u^k$. Making an inner product with $-\Delta_h u^{k+\sigma_k}$ on both hand sides of \eqref{L211}, we obtain
\begin{align}\label{P1}
\big(\mathcal{A}_h\prescript{FH}{0}{\mathcal{D}}^{\alpha_{k+\sigma_k}}_tu^{k+\sigma_k},-\Delta_h u^{k+\sigma_k}\big)
+\big(\Lambda_hu^{k+\sigma_k},\Delta_h u^{k+\sigma_k}\big)
=-\big(\mathcal{A}_hf^{k+\sigma_k},\Delta_h u^{k+\sigma_k}\big),\nonumber\\
~~~~~~~~~~~~~~~~~~~~~~~~~~~~~~~~~~~~~~~~~0\leq k \leq n-1.
\end{align}
Noticing \eqref{FL213}, by Lemmas \ref{L-3.3} and \ref{L-3.6}, we get
\begin{align*}
\sum_{l=0}^k\rho_l^{(k)}\langle u^{k-l+1}-u^{k-l}, \sigma_ku^{k+1}+(1-\sigma_k)u^k \rangle_{\mathcal{A}_h}\geq\frac 1 2 \sum_{l=0}^k\rho_l^{(k)} \Big(|u^{k-l+1}|_{1,\mathcal{A}_h}^2-|u^{k-l}|_{1,\mathcal{A}_h}^2\Big).
\end{align*}
By Lemma \ref{L-3.9}, the second term on the left-hand side of \eqref{P1} follows that
\begin{align*}
\big(\Lambda_hu^{k+\sigma_k},\Delta_h u^{k+\sigma_k}\big)\geq \big(\frac 2 3\big)^{d-1}\|\Delta_hu^{k+\sigma_k}\|^2.
\end{align*}
Substituting the two inequalities into \eqref{P1} and using the Cauchy-Schwarz inequality with the basic inequality $ab\leq \big(\frac 2 3\big)^{d-1}a^2+\big(\frac 3 2\big)^{d-1}\frac 1 4 b^2$, we have
\begin{align*}
&\frac {s^{(k)}} 2 \sum_{l=0}^k\rho_l^{(k)}\Big(|u^{k-l+1}|_{1,\mathcal{A}_h}^2-|u^{k-l}|_{1,\mathcal{A}_h}^2\Big)+\big(\frac 2 3\big)^{d-1}\|\Delta_hu^{k+\sigma_k}\|^2\\
\leq&-\big(\mathcal{A}_h f^{k+\sigma_k},\Delta u^{k+\sigma_k}\big)\leq\|\mathcal{A}_h f^{k+\sigma_k}\|\|\Delta u^{k+\sigma_k}\|\\
\leq&\big(\frac 2 3\big)^{d-1}\|\Delta u^{k+\sigma_k}\|^2+\big(\frac 3 2\big)^{d-1}\frac 1 4\|\mathcal{A}_h f^{k+\sigma_k}\|^2,\ \ 0\leq k \leq n-1,
\end{align*}
which follows that
\begin{align*}
\frac {s^{(k)}} 2 \sum_{l=0}^k\rho_l^{(k)}\Big(|u^{k-l+1}|_{1,\mathcal{A}_h}^2-|u^{k-l}|_{1,\mathcal{A}_h}^2\Big)\leq\big(\frac 3 2\big)^{d-1}\frac 1 4\|\mathcal{A}_h f^{k+\sigma_k}\|^2,\ \ 0\leq k \leq n-1.
\end{align*}
Further we get
\begin{align*}
\rho_0^{(k)}|u^{k+1}|_{1,\mathcal{A}_h}^2
\leq \sum_{l=0}^{k-1}\Big(\rho_l^{(k)}-\rho_{l+1}^{(k)}\Big)|u^{k-l}|_{1,\mathcal{A}_h}^2
     +\rho_k^{(k)}|u^0|_{1,\mathcal{A}_h}^2+\big(\frac 3 2\big)^{d-1}\frac 1 {2s^{(k)}}\|\mathcal{A}_h f^{k+\sigma_k}\|^2,\\
     ~~~~~~~~~~~~~~~~~~~~~~~~~~~~~~~~~~~~~~~~~~~~~~~~~~~~~~~~~~~~~~~~~ 0\leq k \leq n-1.
\end{align*}
According to Lemma \ref{L-coefficient}, it follows that
\begin{align*}
\frac 1 {2s^{(k)}\rho_k^{(k)}}
<\frac{1}{1-\epsilon} \frac{(k+\sigma_k)^{\alpha_k+\sigma_k}}{s^{(k)}(1-\alpha_{k+\sigma_k})}
=\frac{1}{1-\epsilon} t^{\alpha(t)}\Gamma\big(1-\alpha(t)\big)|_{t=t_{k+\sigma_k}}
\leq \frac{1}{1-\epsilon}c_1,
\end{align*}
which leads to
\begin{align}\label{M1}
\rho_0^{(k)}|u^{k+1}|_{1,\mathcal{A}_h}^2
\leq \sum_{l=0}^{k-1}\Big(\rho_l^{(k)}-\rho_{l+1}^{(k)}\Big)|u^{k-l}|_{1,\mathcal{A}_h}^2
     +\rho_k^{(k)}\Big(|u^0|_{1,\mathcal{A}_h}^2+\big(\frac 3 2\big)^{d-1}\frac 1 {1-\epsilon}c_1\|\mathcal{A}_h f^{k+\sigma_k}\|^2\Big),\nonumber\\
     ~~~~~~~~~~~~~~~~~~~~~~0\leq k \leq n-1.
\end{align}
Next the mathematical induction will be used to prove the inequality \eqref{stability}. It is valid for $k=0$. Assume \eqref{stability} is true for $0\leq k\leq q$, now we prove that \eqref{stability} is valid for $k=q+1$. From \eqref{M1}, we obtain
\begin{align*}
\rho_0^{{q}}|u^{q+1}|_{1,\mathcal{A}_h}^2
\leq &\sum_{l=0}^{q-1}\Big(\rho_l^{(q)}-\rho_{l+1}^{(q)}\Big)|u^{q-l}|_{1,\mathcal{A}_h}^2
      +\rho_q^{(q)}\Big(|u^0|_{1,\mathcal{A}_h}^2+\big(\frac 3 2\big)^{d-1}\frac 1 {1-\epsilon}c_1\|\mathcal{A}_h f^{q+\sigma_q}\|^2\Big)\\
\leq &\sum_{l=0}^{q-1}\Big(\rho_l^{(q)}-\rho_{l+1}^{(q)}\Big)\Big(|u^0|_{1,\mathcal{A}_h}^2
      +\big(\frac 3 2\big)^{d-1}\frac 1 {1-\epsilon}c_1\max_{0\leq s\leq q-l-1}\|\mathcal{A}_h f^{s+\sigma_s}\|^2\Big)\\
      &+\rho_q^{(q)}\Big(|u^0|_{1,\mathcal{A}_h}^2+\big(\frac 3 2\big)^{d-1}\frac 1 {1-\epsilon}c_1\|\mathcal{A}_h f^{q+\sigma_q}\|^2\Big)\\
\leq &\Big(\sum_{l=0}^{q-1}(\rho_l^{(q)}-\rho_{l+1}^{(q)})+\rho_q^{(q)}\Big)
      \Big(|u^0|_{1,\mathcal{A}_h}^2+\big(\frac 3 2\big)^{d-1}\frac 1 {1-\epsilon}c_1\max_{0\leq s\leq q}\|\mathcal{A}_h f^{s+\sigma_s}\|^2\Big)\\
\leq &\rho_0^{(q)}\Big(|u^0|_{1,\mathcal{A}_h}^2+\big(\frac 3 2\big)^{d-1}\frac 1 {1-\epsilon}c_1\max_{0\leq s\leq q}\|\mathcal{A}_h f^{s+\sigma_s}\|^2\Big).
\end{align*}
From the above inequality, we obtain
\begin{align*}
|u^{q+1}|_{1,\mathcal{A}_h}^2\leq |u^0|_{1,\mathcal{A}_h}^2+\big(\frac 3 2\big)^{d-1}\frac 1 {1-\epsilon}c_1\max_{0\leq s\leq q}\|\mathcal{A}_h f^{s+\sigma_s}\|^2.
\end{align*}
Therefore, inequality \eqref{stability} is valid for $k=q+1$. This completes the proof.
\end{proof}

Theorem \ref{T-stability} reveals the stability of the difference scheme \eqref{FL211}--\eqref{FL213} with respect to the initial value and the source term. The next theorem is about the convergence of $FL2$-$1_{\sigma}$ scheme.

\begin{theorem}
Let $u(\mathbf{x},t) \in \mathcal{C}^{(6,3)}\big(\overline{\Omega}\times [0,T]\big)$ be the exact solution of the problem \eqref{E1}--\eqref{E3}, and $\big\{u_j^k|j\in \overline{\mathcal{J}}, 0\leq k\leq n\big\}$ be the solution of the difference scheme \eqref{FL211}--\eqref{FL213}. Denote $e_j^k=u(\mathbf{x}_j,t_k)-u_j^k$. Then, we have
\begin{align*}
\big|e^k\big|_{1,\mathcal{A}_h}\leq
\sqrt{\big(\frac 3 2\big)^{d-1} \cdot\frac 1 {1-\epsilon} c_1 \prod_{r=1}^d L^{(r)}}c_0\big(\Delta t ^2+\Delta x^4+\epsilon\big),\ \ 0\leq k\leq n.
\end{align*}
Furthermore, we obtain
\begin{align*}
\big|e^k\big|_1
\leq \sqrt{\big(\frac 3 2\big)^{2d-1}\cdot\frac 1 {1-\epsilon} c_1\prod_{r=1}^d L^{(r)}}c_0\big(\Delta t ^2+\Delta x^4+\epsilon\big),\ \ 0\leq k\leq n.
\end{align*}
\end{theorem}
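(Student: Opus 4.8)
The plan is to reduce this convergence statement to the stability estimate already proved in Theorem \ref{T-stability}, by showing that the error satisfies an equation of exactly the same form as the scheme \eqref{FL211}--\eqref{FL213} but driven by the local truncation term. First I would subtract the scheme \eqref{FL211} from the consistency identity \eqref{CADIFL21}. Because both share the same linear operators $\mathcal{A}_h\,\prescript{FH}{0}{\mathcal{D}}^{\alpha_{k+\sigma_k}}_t$ and $\Lambda_h$, the error $e_j^k=u(\mathbf{x}_j,t_k)-u_j^k$ obeys
\begin{align*}
\mathcal{A}_h\prescript{FH}{0}{\mathcal{D}}^{\alpha_{k+\sigma_k}}_te_j^{k+\sigma_k}
=\Lambda_h\big(\sigma_k e_j^{k+1}+(1-\sigma_k)e_j^k\big)+S_j^k,\quad j\in\mathcal{J},\ 0\leq k\leq n-1,
\end{align*}
together with $e_j^0=0$ for $j\in\mathcal{J}$ (from \eqref{FL212} and \eqref{IBV1}) and $e_j^k=0$ for $j\in\partial\mathcal{J}$, where $S_j^k$ is the truncation term bounded in \eqref{local truncation}.

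Next I would observe that this error system is structurally identical to \eqref{FL211}--\eqref{FL213}, with $e$ in the role of $u$ and $S^k$ in the role of $\mathcal{A}_h f^{k+\sigma_k}$. Hence the argument of Theorem \ref{T-stability} applies verbatim and gives
\begin{align*}
\big|e^k\big|_{1,\mathcal{A}_h}^2\leq\big|e^0\big|_{1,\mathcal{A}_h}^2+\big(\tfrac 3 2\big)^{d-1}\frac{1}{1-\epsilon}c_1\max_{0\leq l\leq k-1}\|S^l\|^2.
\end{align*}
Since $e^0=0$, the first term drops out, and it remains only to control $\|S^l\|^2$ in the discrete $L^2$-norm. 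From \eqref{local truncation}, $\|S^l\|^2=\big(\prod_{r=1}^d\Delta x^{(r)}\big)\sum_{j\in\mathcal{J}}(S_j^l)^2\leq\big(\prod_{r=1}^d\Delta x^{(r)}\big)\big(\prod_{k=1}^d(m^{(k)}-1)\big)c_0^2(\Delta t^2+\Delta x^4+\epsilon)^2$, and because $\Delta x^{(k)}(m^{(k)}-1)<\Delta x^{(k)}m^{(k)}=L^{(k)}$ the grid-point count collapses into the domain volume $\prod_{r=1}^d L^{(r)}$. Substituting and taking square roots yields the first claimed inequality for $|e^k|_{1,\mathcal{A}_h}$.

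Finally, for the second estimate I would invoke Lemma \ref{L-3.8}, which gives $\big(\tfrac 2 3\big)^d|e^k|_1^2\leq|e^k|_{1,\mathcal{A}_h}^2$, i.e. $|e^k|_1^2\leq\big(\tfrac 3 2\big)^d|e^k|_{1,\mathcal{A}_h}^2$. Multiplying the extra factor $\big(\tfrac 3 2\big)^d$ into the bound just obtained raises the exponent from $\big(\tfrac 3 2\big)^{d-1}$ to $\big(\tfrac 3 2\big)^{2d-1}$, and a square root delivers the second inequality. The only genuinely delicate point is the grid-counting step that turns the pointwise bound \eqref{local truncation} into the volume factor $\prod_{r=1}^d L^{(r)}$; once the error equation is identified as a homogeneous-initial-data instance of the stable scheme, everything else is a direct transcription of the stability proof.
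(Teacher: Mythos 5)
Your proposal is correct and follows essentially the same route as the paper: subtract the scheme from the consistency identity \eqref{CADIFL21} to get the error system with zero initial data and truncation source $S_j^k$, apply the stability estimate \eqref{stability} of Theorem \ref{T-stability} with $S^l$ in place of $\mathcal{A}_h f^{l+\sigma_l}$, bound $\|S^l\|^2$ by the volume factor $\prod_{r=1}^d L^{(r)}$ via \eqref{local truncation}, and finish with Lemma \ref{L-3.8}. Your write-up is in fact slightly more explicit than the paper's on the grid-counting step that produces $\prod_{r=1}^d L^{(r)}$, which the paper passes over silently.
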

\begin{proof}
Subtracting \eqref{FL211}--\eqref{FL213} from \eqref{CADIFL21}, \eqref{IBV1}--\eqref{IBV2}, respectively, we obtain the error equations as follows
\begin{align*}
&\mathcal{A}_h\prescript{FH}{0}{\mathcal{D}}^{\alpha_{k+\sigma_k}}_te_j^k
=\Lambda_h\left(\sigma_k e_j^{k+1}+(1-\sigma_k) e_j^k\right)+S_j^k,\ \ j\in \mathcal{J},\ \ 0\leq k \leq n-1,\\
&e_j^0=0,\ \ j\in \mathcal{J},\\
&e_j^k=0,\ \ j\in \partial\mathcal{J},\ \ 0\leq k \leq n.
\end{align*}
Applying \eqref{local truncation} and the priori estimation \eqref{stability}, it leads to
\begin{align*}
\big|e^k\big|_{1,\mathcal{A}_h}^2
\leq &\big|e^0\big|_{1,\mathcal{A}_h}^2+\big(\frac 3 2\big)^{d-1}\cdot\frac 1 {1-\epsilon} c_1  \max_{0\leq l\leq k-1} \|S^l\|^2\\
\leq &\big(\frac 3 2\big)^{d-1}\cdot\frac 1 {1-\epsilon} c_1  \prod_{r=1}^d L^{(r)}\Big(c_0\big(\Delta t^2+\Delta x^4+\epsilon\big)\Big)^2,\ \ 0\leq k\leq n.
\end{align*}
Consequently, we have
\begin{align*}
\big|e^k\big|_{1,\mathcal{A}_h}\leq
\sqrt{\big(\frac 3 2\big)^{d-1}\cdot\frac 1 {1-\epsilon} c_1  \prod_{r=1}^d L^{(r)}}c_0\big(\Delta t^2+\Delta x^4+\epsilon\big) ,\ \ 0\leq k\leq n.
\end{align*}
Furthermore, from Lemma \ref{L-3.8}, we obtain
\begin{align*}
\big|e^k\big|_1
\leq\sqrt{\big(\frac 3 2\big)^d}\big|e^k\big|_{1,\mathcal{A}_h}
\leq\sqrt{\big(\frac 3 2\big)^{2d-1}\cdot\frac 1 {1-\epsilon} c_1\prod_{r=1}^d L^{(r)}}c_0\big(\Delta t ^2+\Delta x^4+\epsilon\big),\ \ 0\leq k\leq n.
\end{align*}
\end{proof}

\section{Numerical results} \label{numerical-results}
In this section, some  numerical examples are presented to verify the effectiveness of $FL2$-$1_{\sigma}$ scheme \eqref{FL211}--\eqref{FL213} compared with $L2$-$1_{\sigma}$ scheme \eqref{L211}--\eqref{L213}. All experiments are performed based on Matlab 2016b on a laptop with the configuration: Intel(R) Core(TM) i7-7500U CPU 2.70GHz and 8.00 GB RAM.

\begin{example}\label{example1}
We show the efficiency of $FL2$-$1_{\sigma}$ scheme \eqref{FL211}--\eqref{FL213} in 2D case comparing with the corresponding $L2$-$1_{\sigma}$ scheme \eqref{L211}--\eqref{L213}. Take $\Omega=(0,\pi)\times(0,\pi)$, $T=1$, and the exact solution is given as \cite{Du-2020}
\begin{align*}
u(\mathbf{x},t)=(t^3+3t^2+1)\sin x^{(1)}\sin x^{(2)}.
\end{align*}
Based on the exact solution, we calculate the initial value and the source term
\begin{align*}
&\varphi(\mathbf{x})=\sin x^{(1)}\sin x^{(2)},\\
&f(\mathbf{x},t)=\left(\frac{6}{\Gamma(4-\alpha(t))}t^{3-\alpha(t)}+\frac{6}{\Gamma(3-\alpha(t))}t^{2-\alpha(t)}+2(t^3+3t^2+1)\right)\sin x^{(1)}\sin x^{(2)}.
\end{align*}
\end{example}

We take $\Delta x^{(1)}=\Delta x^{(2)}=\Delta x$, $m^{(1)}=m^{(2)}=m$, denote
\begin{align*}
&E(\Delta x,\Delta t)=\big\|u^n-u(\mathbf{x},t_n)\big\|_\infty,\\
&Order_t=\log_2\big(E(\Delta x,2\Delta t)/E(\Delta x,\Delta t)\big),\\
&Order_{x,t}=\log_2\big(E(2\Delta x,2\Delta t)/E(\Delta x,\Delta t)\big).
\end{align*}

In the calculations, we set the expected accuracy $\epsilon=\Delta t^2$ and different spatial and temporal step sizes. Note that the linear systems arising from the two-dimensional problems provide coefficient matrices possessing block-tridiagonal-Toeplitz with tridiagonal-Toeplitz-blocks, which can be diagonalized by the discrete sine transforms \cite{Bini-1990, Serra-1999}. Therefore,  as a global after-processing optimization, a fast implementation of the inversion, in our numerical experiments,  is carried out by the fast sine transforms to reduce the computational cost.

Tables \ref{2Dt} and \ref{2Ds} list the results of Example \ref{example1}. Fine spatial size is fixed at $\Delta x=\pi/320$ in Table \ref{2Dt}. Both $L2$-$1_{\sigma}$ scheme and $FL2$-$1_{\sigma}$ scheme can achieve the second-order temporal accuracy. Table \ref{2Dt} also shows the developments of the CPU time and memory of the two schemes with respect to $n$. The CPU time of $FL2$-$1_{\sigma}$ scheme is increasing half as fast as that of $L2$-$1_{\sigma}$ scheme, which reveals the $\mO(n\log^2 n)$ and $\mO(n^2)$ computational complexity of the two schemes, respectively. We note that as $n$ goes up, the memory of $FL2$-$1_{\sigma}$ scheme grows slowly, while the storage requirement of $L2$-$1_{\sigma}$ scheme increases linearly with $n$. Especially, when $n=16000$, $FL2$-$1_{\sigma}$ scheme is finished in 40 min, while $FL2$-$1_{\sigma}$ scheme is failed due to excessive storage requirements. Table \ref{2Ds} gives the numerical results with $m$ varying from $20$ to $160$ and  $n=m^2$. The convergence rates satisfy the theoretical result in Section \ref{finite-difference-scheme}. Nevertheless, a significant reduction is reflected in computational cost and storage memory of $FL2$-$1_{\sigma}$ scheme on the refined mesh, compared with $L2$-$1_{\sigma}$ scheme.

%
%
%
%

\begin{table}[t]
\begin{center}
\caption{Convergence rates and the CPU time, memory of $L2$-$1_{\sigma}$ scheme and $FL2$-$1_{\sigma}$ scheme for Example \ref{example1} with $\alpha (t)=(2+\sin t)/4$, $m=320$, $\epsilon=\Delta t^2$.}
\label{2Dt}
\def\temptablewidth{1.0\textwidth}
{\rule{\temptablewidth}{1.0pt}}
{\footnotesize
\begin{tabular*}{\temptablewidth}{@{\extracolsep{\fill}}cccccccccccc}
& &\multicolumn{4}{c}{{$L2$-$1_{\sigma}$ scheme}} & \multicolumn{4}{c}{{$FL2$-$1_{\sigma}$ scheme}} \\
 \cline{3-6}\cline{7-10}
&$n$    &$E(\Delta x,\Delta t)$      &$Order_t$   &CPU(s)     &Memory      &$E(\Delta x,\Delta t)$   &$Order_t$   &CPU(s)    &Memory  \\
\hline
&2000   &2.3592e-7                   & -          &152.77     &1.64e+9     &2.3497e-7                &-           &194.70    &1.01e+8  \\
&4000   &5.8588e-8                   &2.01        &642.33     &3.27e+9     &5.8411e-8                &2.01        &470.72    &1.17e+8  \\
&8000   &1.4339e-8                   &2.03        &2510.36    &6.52e+9     &1.4319e-8                &2.03        &1021.68   &1.34e+8  \\
&16000  &-                           & -          &-          &-           &3.3034e-9                &2.12        &2311.46   &1.53e+8  \\
\end{tabular*} {\rule{\temptablewidth}{1pt}}
}
\end{center}
\end{table}

\begin{table}[t]
\begin{center}
\caption{Convergence rates and the CPU time, memory of $L2$-$1_{\sigma}$ scheme and $FL2$-$1_{\sigma}$ scheme for Example \ref{example1} with $\alpha (t)=(2+\sin t)/4$, $n=m^2$, $\epsilon=\Delta t^2$.}
\label{2Ds}
\def\temptablewidth{1.0\textwidth}
{\rule{\temptablewidth}{1.0pt}}
{\footnotesize
\begin{tabular*}{\temptablewidth}{@{\extracolsep{\fill}}cccccccccccccc}
& &\multicolumn{4}{c}{{$L2$-$1_{\sigma}$ scheme}} & \multicolumn{4}{c}{{$FL2$-$1_{\sigma}$ scheme}} \\
 \cline{3-6}\cline{7-10}
&$m$  &$E(\Delta x,\Delta t)$   &$Order_{x,t}$  &CPU(s)     &Memory      &$E(\Delta x,\Delta t)$   &$Order_{x,t}$   &CPU(s)   &Memory  \\
\hline
&20   &1.1392e-6                & -             &0.15       &1.20e+6     &1.1971e-6                &-               &0.14     &2.56e+5 \\
&40   &7.2797e-8                &3.97           &3.23       &1.96e+7     &7.4374e-8                &4.01            &2.05     &1.48e+6  \\
&80   &4.6192e-9                &3.98           &185.32     &3.20e+8     &4.6405e-9                &4.00            &52.46    &7.95e+6  \\
&160  &2.4931e-10               &4.21           &7158.69    &5.18e+9     &2.4589e-10               &4.24            &1073.38  &4.15e+7  \\
\end{tabular*} {\rule{\temptablewidth}{1pt}}
}
\end{center}
\end{table}

\begin{example}\label{example2}
We also show the efficiency of $FL2$-$1_{\sigma}$ scheme \eqref{FL211}--\eqref{FL213} in 3D case. Take $\Omega=(0,\pi)\times(0,\pi)\times(0,\pi)$, $T=1$, and the exact solution is given as
\begin{align*}
u(\mathbf{x},t)=(t^3+3t^2+1)\sin x^{(1)}\sin x^{(2)}\sin x^{(3)}.
\end{align*}
Based on the exact solution, we calculate the initial value and the source term
\begin{align*}
&\varphi(\mathbf{x})=\sin x^{(1)}\sin x^{(2)}\sin x^{(3)},\\
&f(\mathbf{x},t)=\left(\frac{6}{\Gamma(4-\alpha(t))}t^{3-\alpha(t)}+\frac{6}{\Gamma(3-\alpha(t))}t^{2-\alpha(t)}+3(t^3+3t^2+1)\right)\sin x^{(1)}\sin x^{(2)}\sin x^{(3)}.
\end{align*}
\end{example}

We take $\Delta x^{(1)}=\Delta x^{(2)}=\Delta x^{(3)}=\Delta x$, $m^{(1)}=m^{(2)}=m^{(3)}=m$, and set the expected accuracy $\epsilon=\Delta t^2$. The errors and convergence orders, CPU time and storage of $L2$-$1_{\sigma}$ scheme and $FL2$-$1_{\sigma}$ scheme are showed in Table \ref{3Dt} with $m=100$ and temporal step sizes refined from $\Delta t=1/200$ to $\Delta t=1/1600$, while in Table \ref{3Ds} $m$ varies from $10$ to $80$ and $n=(2m)^2$. The fast sine transforms is used to reduce the computational cost.

Tables \ref{3Dt} and \ref{3Ds} show that both $FL2$-$1_{\sigma}$ scheme and $L2$-$1_{\sigma}$ scheme can achieve the optimal convergence for three-dimensional problems. Nevertheless, complexity of $FL2$-$1_{\sigma}$ scheme is much cheaper than that of $L2$-$1_{\sigma}$ scheme on the refined mesh. For the fine mesh, however, $L2$-$1_{\sigma}$ scheme has very low efficiency or even cannot proceed because of the limit of memory, while $FL2$-$1_{\sigma}$ scheme solves the relevant problem very well.

\begin{table}[t]
\begin{center}
\caption{Convergence rates and the CPU time, memory of $L2$-$1_{\sigma}$ scheme and $FL2$-$1_{\sigma}$ scheme for Example \ref{example2} with $\alpha (t)=(2+\sin t)/4$, $m=100$, $\epsilon=\Delta t^2$.}
\label{3Dt}
\def\temptablewidth{1.0\textwidth}
{\rule{\temptablewidth}{1.0pt}}
{\footnotesize
\begin{tabular*}{\temptablewidth}{@{\extracolsep{\fill}}cccccccccccccc}
& &\multicolumn{4}{c}{{$L2$-$1_{\sigma}$ scheme}} & \multicolumn{4}{c}{{$FL2$-$1_{\sigma}$ scheme}} \\
 \cline{3-6}\cline{7-10}
&$n$   &$E(\Delta x,\Delta t)$  &$Order_t$  &CPU(s)    &Memory     &$E(\Delta x,\Delta t)$    &$Order_t$  &CPU(s)   &Memory  \\
\hline
&200   &2.6755e-5               & -         &41.64     &1.64e+9    &2.6587e-5                 &-          &99.77    &5.59e+8  \\
&400   &6.6637e-6               &2.01       &156.85    &3.19e+9    &6.6318e-6                 &2.00       &233.83   &6.6.e+8  \\
&800   &1.6528e-6               &2.01       &680.60    &6.30e+9    &1.6475e-6                 &2.01       &564.19   &7.84e+8  \\
&1600  &-                       & -         &-         &-          &4.0177e-7                 &2.04       &1358.80  &9.24e+8  \\
\end{tabular*} {\rule{\temptablewidth}{1pt}}
}
\end{center}
\end{table}

\begin{table}[t]
\begin{center}
\caption{Convergence rates and the CPU time, memory of $L2$-$1_{\sigma}$ scheme and $FL2$-$1_{\sigma}$ scheme for Example \ref{example2}  with $\alpha (t)=(2+\sin t)/4$,  $n=(2m)^2$, $\epsilon=\Delta t^2$.}
\label{3Ds}
\def\temptablewidth{1.0\textwidth}
{\rule{\temptablewidth}{1.0pt}}
{\footnotesize
\begin{tabular*}{\temptablewidth}{@{\extracolsep{\fill}}cccccccccccccc}
& &\multicolumn{4}{c}{{$L2$-$1_{\sigma}$ scheme}} & \multicolumn{4}{c}{{$FL2$-$1_{\sigma}$ scheme}} \\
 \cline{3-6}\cline{7-10}
&$m$ &$E(\Delta x,\Delta t)$  &$Order_{x,t}$  &CPU(s)    &Memory      &$E(\Delta x,\Delta t)$   &$Order_{x,t}$   &CPU(s)    &Memory  \\
\hline
&$10$  &1.2679e-4               & -             &0.32      &2.41e+6     &1.2682e-4                &-               &0.52      &5.06e+5 \\
&$20$  &7.9012e-6               &4.00           &13.54     &8.84e+7     &7.9021e-6                &4.00            &12.61     &6.56e+6  \\
&$40$  &4.9351e-7               &4.00           &1047.68   &3.04e+9     &4.9351e-7                &4.00            &508.26    &7.46e+7  \\
&$80$  & -                      &-              &-         &-           &3.0832e-8                &4.00            &20550.89  &8.01e+8  \\
\end{tabular*} {\rule{\temptablewidth}{1pt}}
}
\end{center}
\end{table}

\section{Concluding Remarks} \label{concluding-remarks}
In this paper, we consider the fast high-order evaluation for the VO Caputo fractional derivative. $FL2$-$1_\sigma$ formula can efficiently reduce the computational storage and cost for the VO fractional derivative. The fast formula is applied to construct $FL2$-$1_\sigma$ scheme for the multi-dimensional VO fractional sub-diffusion equations. We show the properties of $FL2$-$1_{\sigma}$ scheme to study the stability and convergence. Numerical examples are given to verify the theoretical results.

To our best knowledge, there is a shortage of effective ways to solve the VO fractional problem with a weak singularity at the initial time. We refer to \cite{Liao-2018, Stynes-2017} for the use of the nonuniform mesh sizes, which is a valid tool for the CO problems. Therefore, it is consequential to develop fast algorithms for the VO fractional problem with a weak singularity on nonuniform girds. We will consider the case in the future.

\section*{Acknowledgement}
The authors would like to thank Professor
Zhi-zhong Sun for his helpful suggestions during the preparation of this paper.


\begin{thebibliography}{999}

\bibitem{Alikhanov-2015}
\textsc{A. A. Alikhanov}, {\em A new difference scheme for the time fractional diffusion equation}, J. Comput. Phys., 280 (2015), pp. 424--438.

\bibitem{Baffet-2016}
\textsc{D. Baffet and J. S. Hesthaven}, {\em A kernel compression scheme for fractional differential equations}, SIAM J. Numer. Anal., 55 (2017), pp. 496--520.

\bibitem{Benson-2000}
\textsc{D. A. Benson, S. W. Wheatcraft, and M. M. Meerschaert}, {\em Application of a fractional advection dispersion equation}, Water Resour. Res., 36 (2000), pp. 1403--1412.

\bibitem{Bertaccini-2019}
\textsc{D. Bertaccini and F. Durastante}, {\em Block structured preconditioners in tensor form for the all-at-once solution of a finite volume fractional diffusion equation}, Appl. Math. Lett., 95 (2019), pp. 92--97.

\bibitem{Beylkin-2017}
\textsc{G. Beylkin and L. Monz\'{o}n}, {\em Approximation by exponential sums revisited}, Appl. Comput. Harmon. Anal.,
28 (2010), pp. 131--149.

\bibitem{Bini-1990}
\textsc{D. Bini and F. Di Benedetto}, {\em A new preconditioner for the parallel solution of positive
definite Toeplitz linear systems}, in Proceedings of the 2nd SPAA Conference, Crete, 1990,
pp. 220--223.

\bibitem{Chechkin-2005}
\textsc{A. V. Chechkin, R. Gorenflo, and I. M. Sokolov}, {\em Fractional diffusion in inhomogeneous media}, J. Phys. A: Math. Gen., 38 (2005), pp. 679--684.

\bibitem{Chen-2010}
\textsc{C. M. Chen, F. W. Liu, V. Anh, and I. Turner}, {\em Numerical schemes with high spatial accuracy for a variable-order amomalous subdiffusion equation}, SIAM. J. Sci. Comput., 32 (2010), pp. 1740--1760.

\bibitem{Coimbra-2003}
\textsc{C. F. M. Coimbra}, {\em Mechanics with variable-order differential operators}, Ann. Phys., 12 (2003), pp. 692--703.

\bibitem{Diazand-2009}
\textsc{G. Diaz and C. F. M. Coimbra}, {\em Nonlinear dynamics and control of a variable order oscillator with application to the van der pol equation}, Nonlin. Dyn., 56 (2009), pp. 145--157.


\bibitem{Du-2020}
\textsc{R. L. Du, A. A. Alikhanov, and Z. Z. Sun}, {\em Temporal second order difference schemes for the multi-dimensional variable-order time fractional sub-diffusion equations}, Comput. Math. Appl., 79 (2020), pp. 952--2972.

\bibitem{Fang-2020}
\textsc{Z. W. Fang, H. W. Sun, and H. Wang}, {\em A fast method for variable-order caputo fractional derivative with applications to time-fractional diffusion equations}, Comput. Math. Appl., 80 (2020), pp. 1443--1458.

\bibitem{Fu-2017}
\textsc{H. F. Fu and H. Wang}, {\em A preconditioned fast finite difference method for space-time fractional partial differential equations}, Fract. Calc. Appl. Anal., 20 (2017), pp. 88--116.

\bibitem{Hilfer-2000}
\textsc{R. Hilfer}, {\em Applications of Fractional Calculus in Physics}, World Scientific, Singapore, 2000.

%
%

\bibitem{Jia-2017}
\textsc{Y. T. Jia, M. Q. Xu, and Y. Z. Lin}, {\em A numerical solution for variable order fractional functional differential equation}, Appl. Math. Lett., 64 (2017), pp. 125--130.

\bibitem{Jiang-2017}
\textsc{S. D. Jiang, J. W. Zhang, Q. Zhang, and Z. M. Zhang}, {\em Fast evaluation of the Caputo fractional derivative and its applications to fractional diffusion equations}, Commun. Comput. Phys.,
21 (2017), pp. 650--678.

\bibitem{Ke-2015}
\textsc{R. Ke, M. K. Ng, and H. W. Sun}, {\em A fast direct method for block triangular Toeplitz-like with tri-diagonal block systems from time-fractional partial differential equations}, J. Comput. Phys., 303 (2015), pp. 203--211.

\bibitem{Kilbas-2006}
\textsc{A. A. Kilbas, H. M. Srivastava, and J. J. Trujillo}, {\em Theory and applications of fractional differential equations}, Elsevier, Amsterdam, 2006.


\bibitem{Liao-2018}
\textsc{H. L. Liao, D. F. Li, and J. W. Zhang}, {\em Sharp error estimate of the nonuniform $L1$ formula for linear reaction-subdiffusion equations}, SIAM J. Numer. Aanl., 56 (2018), pp. 1112--1133.

%

\bibitem{Liu-2004}
\textsc{F. W. Liu, V. Anh, and I. Turner}, {\em Numerical solution of the space fractional Fokker-Planck equation}, J. Comput. Appl. Math., 166 (2004), pp. 209--219.

%

\bibitem{Lorenzo-2002}
\textsc{C. F. Lorenzo and T. T. Hartley}, {\em Variable order and distributed order fractional operators}, Nonlinear Dyn., 29 (2002), pp. 57--98.

\bibitem{Lu-2015}
\textsc{X. Lu, H. K. Pang, and H. W. Sun}, {\em Fast approximate inversion of a block triangular Toeplitz matrix
with applications to fractional sub-diffusion equations}, J. Numer. Lin. Alg. Appl., 22 (2015), pp. 866--882.

\bibitem{Lu-2018}
\textsc{X. Lu, H. K. Pang, H. W. Sun, and S. W. Vong}, {\em Approximation inversion method for time-fractional subdiffusion equations}, Numer. Lin. Alg. Appl., 25 (2018), e2132.

\bibitem{Lubich-2002}
\textsc{C. Lubich and A. Sch\"{a}dle}, {\em Fast convolution for nonreflecting boundary conditions}, SIAM J. Sci. Comput., 24 (2002), pp. 161--182.

\bibitem{Mainardi-2000}
\textsc{F. Mainardi, M. Raberto, R. Gorenflo, and E. Scalas}, {\em Fractional calculus and continuous-time finance II: the waiting-time distribution}, Phys. A, 287 (2000), pp. 468--481.

\bibitem{Obembe-2017}
\textsc{A. D. Obembe, M. E. Hossain, and S. A. Abu-Khamsin}, {\em Variable-order derivative time
fractional diffusion model for heterogeneous porous media}, J. Petrol. Sci. Eng., 152 (2017), pp. 391--405.

\bibitem{Oldham-1974}
\textsc{K. B. Oldham and J. Spanier}, {\em The fractional calculus}, Academic Press, New York, 1974.


\bibitem{Podlubny-1999}
\textsc{I. Podlubny}, {\em Fractional differential equations}, Academic Press, New York, 1999.

\bibitem{Raberto-2002}
\textsc{M. Raberto, E. Scalas, and F. Mainardi}, {\em Waiting-times and returns in high-frequency financial data: an empirical study}, Phys. A, 314 (2002), pp. 749--755.

\bibitem{Ramirez-2010}
\textsc{L. E. S. Ramirez and C. F. M. Coimbra}, {\em On the selection and meaning of variable order operators for dynamic modeling}, Int. J. Differ. Equ., 2010 (2010), Article ID 846107, 16 pages.

\bibitem{Samko-1993}
\textsc{S. G. Samko and B. Ross}, {\em Integration and differentiation to a variable fractional order}, Integr. Transf. Spec. Funct., 1 (1993), pp. 277--300.

\bibitem{Serra-1999}
\textsc{S. Serra}, {\em Superlinear PCG methods for symmetric Toeplitz systems}, Math. Comput., 68 (1999), pp. 793--803.


\bibitem{Sokolov-2005}
\textsc{I. M. Sokolov and J. Klafter}, {\em From diffusion to anomalous diffusion: a century after einsteins brownian motion}, Chaos, 15 (2005), pp. 1--7.

\bibitem{Soon-2005}
\textsc{C. M. Soon, C. F. M. Coimbra, and M. H. Kobayashi}, {\em The variable viscoelasticity oscillator}, Ann. Phys., 14 (2005), pp. 378--389.

\bibitem{Stynes-2017}
\textsc{M. Stynes, E. O'riordan, and J. L. Gracia}, {\em Error analysis of a finite difference method on graded meshes for a time-fractional diffusion equation}, SIAM J. Numer. Aanl, 55 (2017), pp. 1057--1079.

%
\bibitem{Sun-2019}
\textsc{H. G. Sun, A. Chang, Y. Zhang, and W. Chen}, {\em A review on variable-order fractional differential equations: mathematical foundations, physical models, numerical methods and applications}, Fract. Calc. Appl. Anal., 22 (2019), pp. 27--59.

\bibitem{Sun-2009}
\textsc{H. G. Sun, W. Chen, and Y. Q. Chen}, {\em Variable-order fractional differential operators in anomalous diffusion modeling}, Phys. A, 388 (2009), pp. 4586--4592.

\bibitem{Sun-2011}
\textsc{H. G. Sun, W. Chen, H. Wei, and Y. Q. Chen}, {\em A comparative study of constant-order and variable-order fractional models in characterizing memory property of systems}, Eur. Phys. J. Spec. Top., 193 (2011), pp. 185--192.

%



\bibitem{Sun-2015}
\textsc{Z. Z. Sun and G. H. Gao}, {\em Fractional differential equations}, De Gruyter, Berlin, 2020.

\bibitem{Zeng-2017}
\textsc{F. H. Zeng, I. Turner, and K. Burrage}, {\em A stable fast time-stepping method for fractional integral and derivative operators}, J. Sci. Comput., 77 (2018), pp. 283--307.


\bibitem{Zhang-2020}
\textsc{J. L. Zhang, Z. W. Fang, and H. W. Sun}, {\em Exponential-sum-approximation technique for variable-order time-fractional diffusion equations}, J. Appl. Math. Comput., 2020. submitted.


\bibitem{Zhao-2015}
\textsc{X. Zhao, Z. Z. Sun, and G. E. Karniadakis}, {\em Second-order approximations for variable order fractional derivatives: Algorithms and applications}, J. Comput. Phys., 293 (2015), pp. 184--200.


\bibitem{Zhuang-2009}
\textsc{P. Zhuang, F. W. Liu, V. Anh, and I. Turner}, {\em Numerical methods for the variable-order fractional advection-diffusion equation with a nonlinear source term}, SIAM. J. Numer. Anal., 47 (2009), pp. 1760--1781.


\end{thebibliography}
\end{document}